\newtheorem{thm}{Theorem}[section]
\newtheorem{lemma}[thm]{Lemma}
\theoremstyle{definition}
\newtheorem{definition}[thm]{Definition}
\theoremstyle{remark}
\newtheorem{rem}[thm]{Remark}
\numberwithin{equation}{section}
\newcommand{\N}{{\mathbb N}}
\newcommand{\R}{{\mathbb R}}
\newcommand{\Z}{{\mathbb Z}}
\newcommand{\cal}{\mathcal}
\newcommand{\cl}{c\ell}
\newcommand{\folner}{F\o{}lner}
\begin{document}

\title{Large sets in countable amenable groups and its application}

\author{Dibyendu De}

\address{Department of Mathematics, University of Kalyani, India}

\email{dibyendude@klyuniv.ac.in}

\thanks{The first author was partially supported DST PURSE Grant.}

\author{Pintu Debnath}
\address{Department of Mathematics, University of Kalyani, India}
\email{debnathpintu03@gmail.com}
\thanks{This work is a part of second authors Doctoral Dissertation and supported by CSIR grant}

\subjclass[2000]{Primary 05D10, Secondary 54H20}

\date{\today}

\keywords{Idempotents,
IP-sets, Central Sets, Sets with Positive upper Banach density, Ergodicity, Weak
Mixing}

\begin{abstract}
In the present paper our main objective is to extend the notion of
$D$-sets in countable amenable groups and as a consequence we prove
that a D$^{*}$-set in $(\N,+)$ is a D-set in $(\N,\cdot)$. Further
we also discuss its connection with ergodicity and weak mixing for
amenable group actions. We present an example of an amenable group,
namely the group $(F_{q}[X],+)$, polynomial ring generated by finite
field $F_{q}$, where IP$^{*}$-sets, D$^{*}$-sets and Central$^{*}$-sets
are all equivalent.
\end{abstract}

\maketitle

\section{Introduction}

Central sets in $(\N,+)$ were introduced by Furstenberg \cite{refF}
and are known to have substantial combinatorial structure. For example,
any Central set contains arbitrarily long arithmetic progressions,
all possible finite sums of an infinite sequence, and solutions to
all partition regular systems of homogeneous linear equations. In
succession several notions of sets were introduce all of which have
rich combinatorial structure like Central sets, for example Quasi
Central sets \cite{refBuH}, C- sets \cite{refDHS}. Another inclusion
in this list is D-set \cite{refBD}. Furstenberg's original definition
of Central sets was in terms of notions of topological dynamics.
Later Bergelson and Hindman \cite{refBD} defined the notion of a
Central set in an arbitrary semigroup $(S,+)$ in terms of the algebra
of $\beta S$, the Stone-\v{C}echo compactification of $S$. They also
defined notion of dynamical Central set, using the natural extension
of Furstenberg's definition, and pointed out that any dynamical Central
sets in$(S,+)$ is Central in $(S,+)$. Moreover, a result of Weiss
(see \cite[Theorem 6.11]{refBH}) guarantees that in a countable
semigroup $(S,+)$, a subset of $S$ is Central if and only if it
is dynamically Central. This equivalence of dynamically central and
Central sets for arbitrary semigroup were finally proved in \cite{refSY}
.

On the contrary to Central sets the notion of Quasi Central and C-sets
sets were introduced using algebraic structure of $\beta S$. Equivalent
dynamical characterization were established in \cite{refBuH} and
\cite{refLi} respectively.

The notion of D-set was introduced by Bergelson and Downarowicz in
\cite{refBD} for subsets of $(\N,+)$ using algebraic structure
of $\beta\N$, and proved to be useful in connection with weak mixing
for $\Z$-action. Authors also described dynamical characterization
of D-sets in $(\N,+)$. Using dynamical characterization it can be
easily proved that central sets are quasi central and quasi central
sets are D-sets.

To describe the Central, Quasi Central and D-sets, we shall need to
pause and briefly introduce the algebraic structure of the Stone-\v{C}ech
compactification $S$ of a discrete semigroup $(S,+)$. Given any
discrete semigroup $S$ we take $\beta S$ to be the set of ultrafilters
on $S$, identifying the points of $S$ with the principal ultrafilters.
The topology on $\beta S$ has a basis consisting of $\{\cl{A}:A\subset S\}$,
where $\cl A=\{p\in\beta S:A\in p\}$. The operation on $S$ extends
to $\beta S$, making $S$ a left topological semigroup with $S$ contained
in its topological center. That is, for each $p\in\beta S$, the function
$\lambda_{p}(q)=p+q$ from $\beta S$ to $\beta S$ is continuous. And,
for each $x\in S$, the function $\rho_{x}:\beta S\to\beta S$
defined by $\rho_{x}(q)=q+x$ is continuous. (In spite of the fact
that we are denoting the extension by $+$, the operation on $S$
is very unlikely to be commutative. The center of $(\beta S,+)$ is equal to the center of
$S$. Given $p$ and $q$ in $\beta S$ and $A\subset S$, $A\in p+q$
if and only if $\{x\in S:-x+A\in p\}\in q$.

Any compact left topological semigroup $T$ has a smallest two sided
ideal $K(T)$ which is the union of all of the minimal left ideals
and is also the union of all of the minimal right ideals. The intersection
of any minimal left ideal with any minimal right ideal is a group
and therefore contains idempotents. In particular, there are idempotents
in $K(T)$. Such idempotents are called minimal. A subset $L$ of
$T$ is a minimal right ideal if and only if $L=p+T$ for some minimal
idempotent $p$. See \cite{refB}  and \cite{refB96}  for an elementary introduction
to the algebra of S and for any unfamiliar details. For any discrete
semigroup $S$. a set $A\subset S$ is an IP-set if and only if $A$
is a member of an idempotent in $\beta S$. $A$ is an IP$^{*}$ set
if and only if it is a member of every idempotent in $\beta S$. A
subset $C\subset S$ is called central if it belongs to some idempotents
of $K(\beta S)$. Like IP$^{*}$-set a subset $C\subset S$ is called
Central$^{*}$-set if it belongs to every minimal idempotent of $\beta S$.

The notion of $D$-sets first introduced by Bergelson and Downarowicz
in \cite{refBD} for $(\N,+)$. It was defined analogously to Central
-sets by replacing minimal idempotents by a wider class of idempotents
all of whose members have positive upper Banach density, so that the
class ${\cal D}$ of $D$- sets is (strictly) intermediate between
IP and Central sets. They also obtained a characterization of $D$-sets,
analogous to that of IP-sets and Central-sets. To introduce the notion
of D-sets let us recall the notion of upper Banach density.

\begin{definition}
A subset $A\subset Z$ is said to have positive upper Banach density
if
\[
\bar{d}(A)=\mbox{lim sup}_{(m-n)\to\infty}\frac{|A\cap[n,m-1]|}{m-n}>0.
\]

\end{definition}
An idempotent $p$ in $\beta\Z$ is called \textit{essential idempotent}
if every member of $p$ has positive upper Banach density. members
of \textit{essential idempotents} are called $D$-sets. Since the
notion of essential idempotents not only depends on the algebraic
structure of the Stone-\v{C}echo compactification, it's not straight
forward to generalize this notion for arbitrary semigroup like minimal
idempotents.

The paper is organized as follows. In section 2 we will introduce
the notion of essential idempotents for countable amenable group $G$
and then define the notion of $D$-sets in terms of topological dynamics.
Finally it will be proved that a subset of an amenable group $G$
is a $D$-set if and only if it belongs to some essential idempotents
of $\beta G$.

Like IP$^{*}$ and Central$^{*}$ set, a subset of $G$ will be called
$D^{*}$-set if it belongs to every essential idempotent of $\beta G$.
We denote the collection of all $D^{*}$-sets by ${\cal D}^{*}$ and
the union $\bigcup_{g\in G}(g{\cal D}^{*})$ by denote by ${\cal D}_{+}^{*}$.
In section 3 it will be shown that the familiar ergodic-theoretic
notions of ergodicity, weak mixing can be characterized by ${\cal D}^{*}$
and ${\cal D}_{+}^{*}$ sets.

Finally in section 4 we will discuss some results on Goldbach conjecture
on $\N$ and polynomial rings over finite fields. See \cite{refLe}
for an elementary introduction to polynomial rings over finite fields.

\section{Notion of $D$- sets for countable amenable group}

The notion of upper Banach density has a natural generalization for
 countable amenable groups. For this purpose let us first recall the
notion of discrete amenable group.
\begin{definition}

A discrete group $G$ is said to be amenable if there exists an invariant
mean on the space $B(G)$ of real-valued bounded functions on $G$,
that is, a positive linear functional $L:B(G)\to\R$ satisfying

1. $L(1_{G})=1,$

2. $L(f_{g})=L(_{g}f)=L(f)$ for all $f\in B(G)$ and $g\in G$, where
$f_{g}(t)=f(tg)$ and $_{g}f(t)=f(gt)$.
\end{definition}

The existence of an invariant mean is only one item from a long list
of equivalent properties. We will find the following characterization
of amenability  for discrete groups, which was established by \folner\ in \cite{refFo},
to be especially useful.

\begin{thm}
A countable group $G$ is amenable if and only if it has a left  \folner\
sequence, namely a sequence of finite sets $F_{n}\subset G$, $n\in\N$,
with $|F_{n}|\to\infty$ and such that

\[
\frac{|F_{n}\cap gF_{n}|}{|F_{n}|}\to1,\mbox{ for all }g\in G.
\]
\end{thm}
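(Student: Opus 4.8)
The plan is to prove both implications, taking $G$ countably infinite throughout (a finite group carries its normalized counting measure as an invariant mean, but then no sequence can have $|F_n|\to\infty$, so that case is vacuous for the stated equivalence). For the ``if'' direction, given a left F\o{}lner sequence $(F_n)$ I set $L_n(f)=\frac1{|F_n|}\sum_{x\in F_n}f(x)$. Each $L_n$ is a positive element of $B(G)^*$ with $L_n(1_G)=1$, and $|L_n({}_{g}f)-L_n(f)|\le\|f\|_\infty\,|gF_n\triangle F_n|/|F_n|\to0$ for every fixed $g$. A weak-$*$ cluster point $\ell$ of $(L_n)$ (Banach--Alaoglu) is then positive, satisfies $\ell(1_G)=1$, and by the preceding estimate is left-invariant. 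To obtain the two-sided invariance of the definition I pass to $r(f):=\ell(f^{\vee})$, $f^{\vee}(t)=f(t^{-1})$, which is right-invariant, and then to $m(f):=\ell_x\!\big(r_y(f(xy))\big)$, which a direct check shows is left- and right-invariant; this $m$ is the required invariant mean.

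For the ``only if'' direction, assume an invariant mean $\ell$ on $B(G)=\ell^{\infty}(G)$. The first, and essential, step is to replace $\ell$ by almost-invariant probability measures. The means on $\ell^{\infty}(G)$ form the weak-$*$ closure of the set $P$ of finitely supported probability measures (a Hahn--Banach separation argument, using $\sup_{x\in G}f(x)=\sup_{\mu\in P}\langle\mu,f\rangle\ge\langle\ell,f\rangle$ for $f\in B(G)$), so choose a net $\mu_i\in P$ with $\mu_i\to\ell$ weak-$*$. Left translation $\nu\mapsto g\cdot\nu$ (the push-forward, so $g\cdot\mu$ is uniform on $gF$ whenever $\mu$ is uniform on $F$) is weak-$*$ continuous and fixes $\ell$, hence $g\cdot\mu_i-\mu_i\to0$ weak-$*$; since the weak-$*$ topology of $\ell^{\infty}(G)^*$ restricts on $\ell^1(G)$ to its weak topology, $g\cdot\mu_i-\mu_i\to0$ weakly in $\ell^1(G)$ for each fixed $g$. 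Now fix a finite $K=\{g_1,\dots,g_k\}\subset G$ and $\varepsilon>0$ and consider $\Phi\colon\ell^1(G)\to\bigoplus_{j=1}^{k}\ell^1(G)$ (the $\ell^1$-direct sum), $\Phi(\mu)=(g_1\cdot\mu-\mu,\dots,g_k\cdot\mu-\mu)$: then $\Phi(\mu_i)\to0$ weakly (weak convergence in a finite direct sum is coordinatewise), so $0$ lies in the weak closure of the convex set $\Phi(P)$, and therefore, by Mazur's theorem, in its norm closure. Thus some single $\mu\in P$ satisfies $\sum_{j=1}^{k}\|g_j\cdot\mu-\mu\|_1<\varepsilon$.

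It remains to turn this $\mu$ into an honest F\o{}lner set and then diagonalize. Writing $F_t=\{x:\mu(x)>t\}$, the layer-cake identities give $\int_0^{\infty}|F_t|\,dt=\sum_x\mu(x)=1$ and $\|g\cdot\mu-\mu\|_1=\int_0^{\infty}|gF_t\triangle F_t|\,dt$, so $\int_0^{\infty}\big(\sum_{j=1}^k|g_jF_t\triangle F_t|\big)\,dt<\varepsilon\int_0^{\infty}|F_t|\,dt$ forces a level $t$ for which $\sum_{j}|g_jF_t\triangle F_t|<\varepsilon|F_t|$; for such a $t$ the set $F:=F_t$ is finite and nonempty and $|F\cap g_jF|/|F|>1-\varepsilon/2$ for all $j$. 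Enumerating $G=\{g_1,g_2,\dots\}$ without repetition and applying this with $K_n=\{g_1,\dots,g_n\}$ and $\varepsilon_n=1/n$ produces finite nonempty $F_n$ whose F\o{}lner ratios tend to $1$. Finally $|F_n|\to\infty$ is automatic: if $|F_n|\le m$ held for infinitely many $n$, then for such $n$ with $n>m$ the even integer $|g_jF_n\triangle F_n|$ is $<\varepsilon_n m<2$, hence $0$, so $\langle g_1,\dots,g_n\rangle F_n=F_n$ and $\langle g_1,\dots,g_n\rangle$ is a group of order $\le|F_n|\le m$ containing the $n$ distinct elements $g_1,\dots,g_n$ --- impossible.

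I expect the crux to be the second paragraph: passing from the purely pointwise datum ``an invariant mean exists'' to the norm-level datum ``almost-invariant probability vectors exist''. This is precisely the place where soft pointwise limits do not suffice and one must invoke that a convex set has the same weak and norm closures (Mazur), applied to $\Phi(P)$ inside the finite power $\bigoplus_{1}^{k}\ell^1(G)$. By contrast, the functional-analytic setup preceding it, as well as the layer-cake conversion and the diagonalization following it, are routine.
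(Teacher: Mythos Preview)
The paper does not prove this theorem at all: its ``proof'' is the single line citing \cite[Corollary~5.3]{refNam}. Your proposal, by contrast, supplies a complete self-contained argument, and it is correct. In fact what you have written is essentially Namioka's original proof: the passage from weak to norm almost-invariance in $\ell^1(G)$ via Mazur's theorem, followed by the layer-cake identity $\|g\cdot\mu-\mu\|_1=\int_0^\infty|gF_t\triangle F_t|\,dt$ to extract a F\o{}lner set from an almost-invariant density, is exactly the mechanism of \cite{refNam}. Two further points worth noting: your observation that the clause $|F_n|\to\infty$ forces one to read the statement for countably \emph{infinite} $G$ is well taken (the theorem as stated is literally false for finite $G$), and your parity argument showing that $|F_n|\to\infty$ comes for free once the F\o{}lner ratios tend to $1$ is a clean way to handle a detail that is often glossed over.
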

\begin{proof}
\cite[Corollary 5.3]{refNam}.
\end{proof}

\begin{definition}
Let $G$ be a countable amenable group. A subset $E$ of $G$ is said
to have positive \folner\ density or upper density with respect
to some  \folner\ sequence $\{F_{n}\}_{n\in\N}$ provided
that
\[
\mbox{lim sup}{}_{n\to\infty}\frac{|E\cap F_{n}|}{|F_{n}|}>0.
\]

This will be denoted by $\overline{d}_{F_{n}}(E)$.
\end{definition}

\begin{definition}
Let $G$ be a countable amenable group. A subset $E$ of $G$ is said
to have positive upper density if
$$\bar{d}(E)=\text{sup}\{d_{F_{n}}(E): F_{n}\text{ is a \folner\ sequence in }G\}>0.$$

\end{definition}

From \cite[Theorem 3]{refFo} one can show that a set $A\subseteq G$ has positive upper density
with respect to some \folner\ sequence if and only if there
exists an invariant mean $L$ on $B(G)$ such that $L(1_{A})>0$.

\begin{definition}

By a dynamical system we mean a pair $(X,\,\langle T_{g}\rangle_{g\in G})$ where
\begin{enumerate}
\item $G$ is a  semigroup
\item $\langle T_{g}\rangle_{g\in G}$ is continuous for all $g\in G$.
\item  $g,h\in G$, we have $T_g\circ T_h = T_{gh}$ .
\end{enumerate}
\end{definition}
\begin{definition}
A point $y$ contained in the dynamical system $(X,\,\langle T_{g}\rangle_{g\in G})$
is said to be essentially recurrent if the set of visits $\{g\in G:T_{g}y\in U_{y}\}$
for any neighborhood $U_{y}$ of $y$ has positive upper density.
\end{definition}

This can be easily observe that in any $G$ be a countable amenable
group syndetic sets always have positive upper density. This shows
that every uniformly recurrent point is essentially recurrent. A characterization
of essentially recurrent points in terms of the properties of their
orbit closures is provided below.

\begin{definition}
A dynamical system $(Y,\,\langle T_{g}\rangle_{g\in G})$ will be
called measure saturated if every nonempty open set $U$ there exists
an invariant measure $\mu$ such that $\mu(U)>0$.
\end{definition}

\begin{thm}
A point $y$ in a dynamical system $(Y,\,\langle T_{g}\rangle_{g\in G})$
is essentially recurrent if and only if the orbit closure $\overline{O}(y)=\overline{\{T_{g}(y):g\in G\}}$
is measure saturated.
\end{thm}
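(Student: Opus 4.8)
The plan is to prove both implications by relating visits of the point $y$ to invariant measures on the orbit closure $Y_y := \overline{O}(y)$, using the standard correspondence between invariant means on $B(G)$ and invariant measures on compact $G$-systems, together with the remark recorded after Definition 2.5 that a set has positive upper density with respect to some \folner{} sequence exactly when it is given positive mass by some invariant mean.

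For the forward direction, suppose $y$ is essentially recurrent and let $U$ be a nonempty open subset of $Y_y$. Since the orbit of $y$ is dense in $Y_y$, there is some $h\in G$ with $T_h y\in U$, so that replacing $U$ by $T_h^{-1}U\cap (\text{a small neighborhood of } y)$ we may assume $U$ is a neighborhood of $y$ itself; then $A:=\{g\in G:T_g y\in U\}$ has positive upper density, hence there is an invariant mean $L$ on $B(G)$ with $L(1_A)>0$. The plan is to push $L$ forward to a measure on $Y_y$ via the orbit map: define $\mu(f)=L\big(g\mapsto f(T_g y)\big)$ for $f\in C(Y_y)$. One checks $\mu$ is a positive, normalized, $G$-invariant functional (invariance of $\mu$ follows from left-invariance of $L$ and the cocycle identity $T_g\circ T_h=T_{gh}$), hence by Riesz representation an invariant Borel probability measure on $Y_y$. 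Choosing $f$ with $0\le f\le 1_U$ and $f$ bounded below by a positive constant on a smaller neighborhood of $y$ contained in $U$, and noting the set where $g\mapsto f(T_g y)$ exceeds that constant is contained in $A$, one gets $\mu(U)\ge\mu(f)=L(g\mapsto f(T_g y))>0$. So $Y_y$ is measure saturated.

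For the converse, suppose $Y_y$ is measure saturated and let $U_y$ be a neighborhood of $y$; we must show $A=\{g:T_g y\in U_y\}$ has positive upper density. Pick a nonempty open $V$ with $y\in\overline V\subseteq U_y$ (or $V\subseteq U_y$), take an invariant measure $\mu$ with $\mu(V)>0$, and pull $\mu$ back to an invariant mean. Here the cleanest route is via \folner{} averages: by the mean ergodic / pointwise considerations, for a suitable \folner{} sequence $\{F_n\}$ one has $\frac{1}{|F_n|}\sum_{g\in F_n}1_V(T_g y)\to$ (a quantity whose limsup is at least) $\mu(V)>0$ — more carefully, one uses that $\int 1_V\,d\mu \le \limsup_n \frac1{|F_n|}\sum_{g\in F_n}1_V(T_g z)$ for $\mu$-a.e.\ $z$, combined with density of the orbit of $y$ and upper semicontinuity along shifts, to transfer positivity of the average to $y$ itself. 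Since $\{g\in F_n: T_g y\in V\}\subseteq A\cap F_n$, this yields $\overline{d}_{F_n}(A)>0$, i.e.\ $A$ has positive upper density.

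The main obstacle I anticipate is the converse direction: genuinely transferring the positive mass $\mu(V)$ from a $\mu$-generic point of $Y_y$ (or from the measure) down to the \emph{specific} point $y$, whose orbit is dense but need not be generic for $\mu$. The fix is to exploit that $y$ lies in the support-closure structure of $Y_y$: one takes $V$ slightly smaller than $U_y$, uses that $\{z: \liminf_n \frac1{|F_n|}\sum_{g\in F_n}1_{\overline V}(T_g z)\ge \mu(V)\}$ is nonempty (indeed $\mu$-conull by Birkhoff for the relevant \folner{} sequence, or by a soft Hahn–Banach/invariant-mean argument avoiding pointwise convergence altogether), and then approximates $y$ by orbit points $T_h y$ landing in that set, so that visits of $T_h y$ to $\overline V$ translate, after the shift by $h$, into visits of $y$ to $U_y$ with an at-most-bounded loss in density. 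Handling the shift-by-$h$ bookkeeping cleanly is where the \folner{} condition $|F_n\cap hF_n|/|F_n|\to 1$ from Theorem 2.1 is essential.
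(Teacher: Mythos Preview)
Your proposal is correct and tracks the paper's argument closely, with modest differences in packaging.

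For the forward direction, the paper constructs the invariant measure as a weak-$*$ limit of the empirical measures $\mu_n=\frac{1}{|F_n|}\sum_{g\in F_n}\delta_{T_g y}$ along a \folner\ sequence witnessing positive density of the visit set, whereas you push forward an invariant mean on $B(G)$ via the orbit map $g\mapsto T_g y$ and invoke Riesz. These are two realizations of the same construction. To pass from neighborhoods of $y$ to arbitrary nonempty open $U\subseteq\overline{O}(y)$, the paper argues that the closure of the union of supports of all invariant measures on $\overline{O}(y)$ is a closed invariant set containing $y$, hence equals $\overline{O}(y)$; your reduction via a translate $T_h^{-1}U$ is more direct. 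One small wrinkle in your write-up: you should pick the invariant mean $L$ adapted to the \emph{smaller} neighborhood on which your test function $f$ is bounded below, not to $U$ itself, so that $L\big(g\mapsto f(T_g y)\big)>0$ is immediate.

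For the converse, both you and the paper first locate, via \folner\ averages and Fatou (or Birkhoff), an auxiliary point $y'\in\overline{O}(y)$ with positive visit frequency to $U_y$, and then transfer this to $y$. The paper's transfer step is terse: for each $g$ with $T_g y'\in U_y$ it produces some $g'$ (depending on $g$) with $T_{gg'}y\in U_y$ and simply asserts positive upper density. You correctly isolate this as the delicate point and sketch the right repair: for each $n$, use continuity on the finite set $F_n$ to choose a single $h_n$ with $T_{h_n}y$ close enough to $y'$ that it inherits the visit pattern on $F_n$, then observe that $(F_n h_n)_n$ is again a left \folner\ sequence witnessing positive upper density of $\{g:T_g y\in U_y\}$. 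Your instinct that the \folner\ condition is exactly what absorbs the shift-by-$h$ bookkeeping is on target.
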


\begin{proof}
First let us show that if a point $y$ in a dynamical system $(Y,\,\langle T_{g}\rangle_{g\in G})$
is essentially recurrent then the orbit closure $\overline{O}(y)=\overline{\{T_{g}(y):g\in G\}}$
is measure saturated. Let $U_{y}$ be an open neighborhood of $y$
containing a closed neighborhood $U$. Since $y$ is essentially recurrent,
the set $A=\{g\in G:T_{g}y\in U_{y}\}$ has positive upper density
$d$ with respect to a \folner\ sequence $\{F_{n}\}_{n\in\N}$ in $G$. Then 
\[
\mbox{lim sup }{}_{n\to\infty}\frac{|E\cap F_{n}|}{|F_{n}|}>0.
\]
Passing to a subsequence we can say that there exists a sequence $\{F_{n}\}_{n\in\N}$
in $G$ with $|F_{n}|\to\infty$, such that $\frac{|E\cap F_{n}|}{|F_{n}|}$
converges to $d$. Let $\{\mu_{n}:n\in\N\}$ be the normalized counting
measures supported by the sets $\{T_{g}(y):g\in F_{n}\}$. Then it
is easy to observe that for each $n\in\N$ we have $\mu_{n}\circ T_{g}=\mu_{n}$.
Let $\mu$ be a weak limit of the sequence $(\mu_{n})_{n\in\N}$,
where a sequence of measures $(\mu_{n})_{n\in\N}$ converges to $\mu$
weak if $\int f\mu_{n}\to\int f\mu$ for every continuous function
$f$ on the space $Y$. Clearly $\mu$ is also $T_{g}$ invariant
for each $g\in G$ and supported by $\overline{O}(y)=\{T_{g}(y):y\in G\}$
and satisfies $\mu(U)>0$, and thus $\mu(U_{y})>0$.

Now for any invariant measure $\mu$ carried by $\overline{O}(y)$
we define

\[
M_{\mu}=\{x\in\overline{O}(y):\mbox{there exists a neighborhood }V\mbox{ of }x\mbox{ such that }\mu(V)>0\}.
\]
Let $M$ be the closure of union of all $M_{\mu}$'s. Then $y\in M$
and since $M$ is a closed invariant set, it follows that $M=\overline{O}(y)$,
i.e., $\overline{O}(y)$ is measure saturated.

Conversely, assume that $\overline{O}(y)$ is measure saturated. Let
$U_{y}\ni y$ be an open set. Then there exist an invariant measure
$\mu$ supported by $\overline{O}(y)$ such that $\mu(U_{y})=a>0$.
We have to show that the set $\{g\in G:T_{g}y\in U_{y}\}$ has positive
upper density. Since $G$ is a countable amenable group there exists
a  \folner\ sequence say $(F_{n})_{n\in\N}$, namely a sequence
of finite sets $F_{n}\subset G$, $n\in\N$ with $|F_{n}|\to\infty$
and such that $\frac{|F_{n}\cap gF_{n}|}{|F_{n}|}\to1$ for all $g\in G$.
Let us now set
\[
f_{n}(x)=\frac{1}{|F_{n}|}\sum_{g\in F_{n}}1_{U_{y}}(T_{g}(x)).
\]

Then we have that $0\leq f_{n}(x)\leq1$ for all $x$ and since $T_{g}$'s
are measure preserving we have that $\int f_{n}d\mu\geq a>0$ for
all $n\in\N$. Let $f(x)=\mbox{lim sup }{}_{n\to\infty}f_{n}(x)$.
By Fatso's Lemma, we have
\[
\int fd\mu=\int\mbox{lim sup }{}_{n\to\infty}f_{n}(x)d\mu\geq\mbox{lim sup }{}_{n\to\infty}\int f_{n}(x)\geq a.
\]

Since $\mu$ is supported on $\overline{O}(y)$ there exists $y^{\prime}\in\overline{O}(y)$
such that $f(y^{\prime})=a>0$. Now let us set $R=\{g\in G:T_{g}y^{\prime}\in U_{y}\}$.
Since

\[
f(y^{\prime})=\mbox{lim sup}_{n\to\infty}\frac{1}{|F_{n}|}\sum_{g\in F_{n}}1_{U_{y}}(T_{g}(x)),
\]

it follows that $R$ has \folner\ density $a$. Now for any $g\in G$
with $T_{g}y^{\prime}\in U_{y}$ there exists a neighborhood $V$
of $y^{\prime}$ such that $T_{g}(V)\subset U_{y}$. Choose $g^{\prime}\in G$
such that $T_{g^{\prime}}y\in V$ so that $T_{gg^{\prime}}y\in U_{y}$.
It follows that $\{g\in G:T_{g}y\in U_{y}\}$ has positive upper density
and hence $y$ is essentially recurrent point.
\end{proof}

Let us  extend the notion of essential idempotent in $\beta\Z$
for countable amenable group.

\begin{definition}
Let $G$ be a countable amenable group. An idempotent $p\in\beta G$
is said to be an essential idempotent if every member of $p$ has
positive upper density.
\end{definition}

For any $x\in G$, let us set $r_{x}:G\to G$ by $r_{x}(g)=gx$.
Now $\beta G$ becomes a compact left topological semigroup under the natural extension $\lambda_{q}(p)=q\cdot p$.
Let us first  characterize essentially recurrent points in the dynamical
system $(\beta G,\langle\lambda_{q}\rangle_{q\in \beta G})$.

\begin{lemma}
Let $G$ be a countable amenable group. An idempotent $q$ in $\beta G$
is an essentially recurrent point in the topological dynamical system
\textup{$(\beta G,\langle\lambda_{q}\rangle_{q\in \beta G})$,} if
and only it is essential idempotent in $(\beta G,\cdot)$.
\end{lemma}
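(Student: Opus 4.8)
The plan is to translate the abstract algebraic condition ``$p$ is an essential idempotent'' into the dynamical condition ``$p$ is an essentially recurrent point of $(\beta G,\langle\lambda_q\rangle)$'' by unravelling what a neighbourhood of $p$ looks like and what the return-time set is. Recall that a basic neighbourhood of $p$ in $\beta G$ is $\cl A$ with $A\in p$, and that for the action by left translations the orbit of $p$ is $\{\lambda_q(p):q\in G\}=\{q\cdot p:q\in G\}$, with $T_x p = x\cdot p$ for $x\in G$. So for a member $A\in p$, the return-time set to the neighbourhood $\cl A$ of $p$ is exactly $\{x\in G: x\cdot p\in\cl A\}=\{x\in G: A\in x\cdot p\}=\{x\in G: x^{-1}A\in p\}$. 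The key elementary observation I would isolate first is this: when $p$ is an idempotent, $p=p\cdot p$, so $A\in p$ iff $A\in p\cdot p$ iff $\{x\in G: x^{-1}A\in p\}\in p$; hence the return-time set $\{x: x^{-1}A\in p\}$ is itself a member of $p$ whenever $A\in p$. This self-referential feature of idempotents is what makes the equivalence work and is the heart of the argument.

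For the forward direction, suppose $p$ is an essential idempotent, and let $\cl A$ (with $A\in p$) be a basic neighbourhood of $p$. By the observation above, the return-time set $B:=\{x\in G: x^{-1}A\in p\}$ belongs to $p$. Since $p$ is essential, every member of $p$ has positive upper density, so $B$ has positive upper density; but $B$ is precisely $\{x\in G: T_x p\in\cl A\}$. As the basic neighbourhoods $\cl A$, $A\in p$, form a neighbourhood base at $p$ and every neighbourhood of $p$ contains such a $\cl A$, it follows that for every neighbourhood $U_p$ of $p$ the set of visits $\{x: T_x p\in U_p\}$ has positive upper density, i.e.\ $p$ is essentially recurrent.

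For the converse, suppose $p$ is essentially recurrent in $(\beta G,\langle\lambda_q\rangle)$; I must show every $A\in p$ has positive upper density. Given $A\in p$, I would like to find a neighbourhood $U_p$ of $p$ whose return-time set is contained in (or controls the density of) $A$. The clean move is again via idempotency: since $p=p\cdot p$, the set $A^\star:=\{x\in G: x^{-1}A\in p\}$ is in $p$, and for $x\in A^\star$ we have $x^{-1}A\in p\subseteq$ (a nonprincipal ultrafilter), so in particular $x\in x^{-1}A$ would need $e\in$ that translate — that is not automatic, so instead I would use the standard refinement: replace $A$ by $A\cap A^\star$, still a member of $p$, and recall the classical fact that for an idempotent $p$ and $A\in p$ the set $A^\star\cap A$ has the property that for every $x$ in it, $x^{-1}(A^\star\cap A)\in p$ as well. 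Then $\cl{A}$ is a neighbourhood of $p$, and $\{x: T_x p\in\cl A\}=\{x: x^{-1}A\in p\}\subseteq A$ after intersecting appropriately (using $e\cdot A = A$ or a left-translation normalization), so this visit set has positive upper density by essential recurrence, whence $A$ does too. Since $A\in p$ was arbitrary, $p$ is an essential idempotent.

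The main obstacle I anticipate is the bookkeeping in the converse: getting an honest inclusion of the form ``return-time set $\subseteq A$'' (or at least a subset of $A$ of equal density) rather than merely ``return-time set $\in p$,'' because $x^{-1}A\in p$ does not by itself force $x\in A$. The fix is to pass from $A$ to the smaller member $A^\star\cap A$ and exploit that $e$ belongs to the relevant set, or equivalently to use the well-known lemma that members of an idempotent can be chosen so that $x^{-1}A\in p$ implies $x\in A$; this is where I would be most careful, and it is essentially the only place the idempotent hypothesis (as opposed to a general ultrafilter) is really used. Everything else — that normalized counting measures along a \folner\ sequence witness positive upper density, and that $\{\cl A: A\in p\}$ is a neighbourhood base — is routine given the definitions and Theorem stated above.
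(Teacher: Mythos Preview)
Your forward direction (essential idempotent $\Rightarrow$ essentially recurrent) is correct and in fact cleaner than the paper's: once you observe that idempotency forces the return-time set $\{x:x^{-1}A\in p\}$ to lie in $p$, essentiality gives positive density immediately. (In the non-abelian case you should check whether the paper's convention for the product makes the return set $\{x:x^{-1}A\in p\}$ or $\{x:Ax^{-1}\in p\}$; the idea is unaffected, but the bookkeeping needs one line of care.)

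The converse direction, however, has a genuine gap, and your proposed fix does not close it. You correctly identify the obstacle: knowing that the return-time set $R_A=\{x:x^{-1}A\in p\}$ has positive upper density does \emph{not} by itself force $\bar d(A)>0$. But the ``well-known lemma'' you invoke --- that one can refine $A$ to a member $A'$ of $p$ with $\{x:x^{-1}A'\in p\}\subseteq A$ --- is not true. The standard $A^\star$ trick gives the \emph{opposite} implication ($x\in A^\star\Rightarrow x^{-1}A^\star\in p$), and the inclusion you need fails already at the identity: $e^{-1}A'=A'\in p$ for every $A'\in p$, so $e\in R_{A'}$ always, yet $e$ need not lie in $A$. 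No finite modification or refinement of $A$ inside $p$ produces the inclusion $R_{A'}\subseteq A$.

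The paper handles this direction by a genuinely different mechanism: it invokes the characterisation of essential recurrence via measure saturation (its Theorem~2.8). If $q$ is essentially recurrent then $\overline{O}(q)$ carries, for each neighbourhood $\overline{E}$ of $q$, a $G$-invariant probability measure $\mu$ with $\mu(\overline{E})>0$. Such a $\mu$ induces an invariant mean $L$ on $B(G)$ via $L(f)=\int_{\beta G}\tilde f\,d\mu$, and $L(1_E)=\mu(\overline{E})>0$; by the remark following Definition~2.4 this is exactly the statement that $E$ has positive upper density. In other words, the passage ``return set has positive density $\Rightarrow$ $E$ has positive density'' is achieved by translating through invariant measures on $\beta G$ rather than by any combinatorial inclusion, and this appears to be unavoidable.
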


\begin{proof}
Let $q$ be essentially recurrent point in the topological dynamical
system $(\beta G,\langle\lambda_{q}\rangle_{q\in \beta G})$ and let
$E$ be any element of $q$. We have to show that $E$ has positive
upper density in the amenable group $G$. The closure $\overline{E}$
of $E$ in $\beta G$ can be interpreted as a neighborhood of $q$.
Since $q$ is essentially recurrent, $\overline{O}(q)$ is measure
saturated and hence exists an invariant measure $\mu$ such that $\mu(\overline{E})>0$.
Choose $g\in G$ such that $g\in\overline{E}$, $g\in E$. Since $\mu$
is supported by the orbit closure of the identity the set $\{g\in G:\lambda_{g}(e)\in\overline{E}\}$
has positive upper density $(\beta G,\langle\lambda_{p}\rangle_{p\in \beta G})$.
But the $\{g\in G:\lambda_{g}(e)\in\overline{E}\}=E$. This implies
that $E$ has positive upper density, and hence $q$ is an essential
idempotent in $(\beta G,\cdot)$.

To prove the converse consider the map defined by $\lambda_{q}(p)=q\cdot p$
onto $\overline{O}(q)$ and both $e$ and $q$ map to $q$. A neighborhood
$U_{q}$ of $q$ in $\overline{O}(q)$ lifts to a neighborhood $V_{q}$
of $q$ in $\beta G$ and the set $R_{q}=\{g\in G:\lambda_{g}(q)\in U_{q}\}$
contains the set $R_{e}=\{g\in G:\lambda_{g}(e)\in V_{q}\}$
in fact $g\in V_{q}$ implies that $\lambda_{g}(q)\in U_{q}$
as $qU_{q}=V_{q}$. But the set $R_{e}$ is a member of $q$ (because
its complement is not). Since $q$ is assumed to be an essential idempotent,
all members of $q$ have positive upper density. It follows that $R_{e}$
has positive upper density and hence, $q$ is essentially recurrent.
\end{proof}
To prove our main theorem let us recall the following lemma from \cite{refBH}.
\begin{lemma}
Let $\pi:X\to Y$ be a topological factor map (surjection) between
dynamical systems $(X,S)$ and $(Y,T)$. If $y$ is an essentially
recurrent point in $Y$ then there exists an essentially recurrent
$\pi$-lift $x$ of $y$. Moreover, we can find such $x$ for which
$\overline{O}(x)$ contains no proper closed invariant subset which
is mapped by $\pi$ onto $\overline{O}(y)$.
\end{lemma}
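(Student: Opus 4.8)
The plan is to produce the lift $x$ as a transitive point of a ``minimal'' closed invariant set sitting above $\overline{O}(y)$, and then to deduce essential recurrence of $x$ from the measure saturation of $\overline{O}(y)$ by pushing a hypothetical scarcity of invariant measures from $X$ down to $Y$.

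Write $G$ for the (countable amenable) acting group, $\langle S_{g}\rangle_{g\in G}$ and $\langle T_{g}\rangle_{g\in G}$ for the two actions, so $\pi\circ S_{g}=T_{g}\circ\pi$, and set $Y_{0}=\overline{O}(y)$. First I would record that, since $y$ is essentially recurrent, the characterization theorem above gives that $Y_{0}$ is measure saturated. Next, consider the family $\mathcal{Z}$ of all nonempty closed $\langle S_{g}\rangle$-invariant sets $Z\subseteq X$ with $\pi(Z)=Y_{0}$. It contains $\pi^{-1}(Y_{0})$, and if $\{Z_{\alpha}\}$ is a chain in $\mathcal{Z}$ then $\bigcap_{\alpha}Z_{\alpha}\in\mathcal{Z}$: closedness and invariance are clear, and for each $y'\in Y_{0}$ the sets $Z_{\alpha}\cap\pi^{-1}(y')$ form a chain of nonempty compact sets, so $\bigcap_{\alpha}Z_{\alpha}$ still meets $\pi^{-1}(y')$. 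By Zorn's lemma $\mathcal{Z}$ has a minimal element $Z_{0}$. Pick $x\in Z_{0}$ with $\pi(x)=y$ (possible since $\pi(Z_{0})=Y_{0}$). Then $\overline{O}(x)\subseteq Z_{0}$, while $\pi(\overline{O}(x))$ is a closed invariant subset of $Y_{0}$ containing $y$, hence equal to $Y_{0}$; so $\overline{O}(x)\in\mathcal{Z}$, and minimality of $Z_{0}$ forces $\overline{O}(x)=Z_{0}$. In particular any proper closed invariant subset of $\overline{O}(x)$ mapping onto $Y_{0}$ would be a member of $\mathcal{Z}$ strictly inside $Z_{0}$, which is impossible; this settles the ``moreover'' clause.

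It then remains to show $x$ is essentially recurrent, equivalently (by the characterization theorem) that $Z_{0}=\overline{O}(x)$ is measure saturated. Fix a nonempty relatively open $U\subseteq Z_{0}$ and suppose toward a contradiction that $\mu'(U)=0$ for every $\langle S_{g}\rangle$-invariant Borel probability measure $\mu'$ on $Z_{0}$. Set $N=Z_{0}\setminus\bigcup_{g\in G}S_{g}^{-1}(U)$; since $G$ is a group this is a closed invariant subset of $Z_{0}$, and for any invariant $\mu'$ we have $\mu'(S_{g}^{-1}U)=\mu'(U)=0$, so $N$ carries every invariant measure of $Z_{0}$ --- and there is at least one, since $G$ is amenable and $Z_{0}$ is compact, so $N\neq\emptyset$. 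As $U\neq\emptyset$ we have $N\neq Z_{0}$, hence by minimality of $Z_{0}$ in $\mathcal{Z}$ we get $\pi(N)\subsetneq Y_{0}$, so $W=Y_{0}\setminus\pi(N)$ is a nonempty open subset of $Y_{0}$. Now $\pi|_{Z_{0}}\colon Z_{0}\to Y_{0}$ is a factor map, the set of probability measures on $Z_{0}$ pushing forward to a fixed invariant measure $\lambda$ on $Y_{0}$ is a nonempty compact convex set stable under the $\langle S_{g}\rangle$-action, so by amenability it contains an invariant measure $\mu'$; then $\lambda=\pi_{*}\mu'$ is supported on $\pi(N)$, whence $\lambda(W)=0$. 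Thus every invariant measure on $Y_{0}$ vanishes on $W$, contradicting the measure saturation of $Y_{0}$. Therefore some invariant measure on $Z_{0}$ charges $U$; as $U$ was arbitrary, $Z_{0}$ is measure saturated, and $x$ is essentially recurrent.

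The real obstacle is this last paragraph: minimality in $\mathcal{Z}$ is much weaker than minimality of a dynamical system, so it produces no fully supported invariant measure on $Z_{0}$ directly, and measure saturation does not pass to subsystems; the trick is to convert a hypothetical absence of invariant measures charging $U$ into a genuine closed invariant subset $N\subsetneq Z_{0}$ and then transport the resulting deficiency down to $Y_{0}$ by lifting invariant measures, where the hypothesis on $Y_{0}$ kills it. The two soft ingredients used in that transport --- existence of an invariant measure for a countable amenable group on a compact space, and surjectivity of $\pi_{*}\colon\mathcal{M}(Z_{0})\to\mathcal{M}(Y_{0})$ together with the fixed-point property of amenable groups acting affinely on compact convex sets --- are standard and should be quoted rather than reproved.
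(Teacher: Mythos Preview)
The paper does not actually prove this lemma: it introduces it with ``let us recall the following lemma from \cite{refBH}'' and gives no argument. (The citation is almost certainly a slip for \cite{refBD}, since the notion of essential recurrence and this very lifting lemma come from Bergelson--Downarowicz, not Bergelson--Hindman.) So there is no in-paper proof to compare against; what can be assessed is whether your proposal is correct and how it relates to the proof in the cited source.

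Your argument is correct. The Zorn step producing a minimal element $Z_{0}$ of $\mathcal{Z}$ and the identification $\overline{O}(x)=Z_{0}$ are exactly the standard construction (the same one used for uniformly recurrent lifts). The measure-saturation step is also sound: the set $N=Z_{0}\setminus\bigcup_{g}S_{g}^{-1}(U)$ is closed, invariant, carries every invariant measure on $Z_{0}$, is nonempty by amenability, and is proper since $U\neq\emptyset$; minimality of $Z_{0}$ then forces $\pi(N)\subsetneq Y_{0}$, and lifting any invariant $\lambda$ on $Y_{0}$ to an invariant $\mu'$ on $Z_{0}$ via the amenable fixed-point property shows $\lambda(Y_{0}\setminus\pi(N))=0$, contradicting measure saturation of $Y_{0}$. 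This is essentially the argument given by Bergelson and Downarowicz: they too take a Zorn-minimal lift and argue that the closure of the union of supports of invariant measures on $Z_{0}$ must be all of $Z_{0}$, by pushing any deficiency down through $\pi$ and invoking surjectivity of $\pi_{*}$ on invariant measures. Your phrasing via the complement $N$ and the explicit use of the amenable fixed-point property to lift measures is perhaps slightly more explicit than the original, but the route is the same.
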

Now we are in a position to prove our main Theorem of this section.
\begin{thm}
Let $G$ be a countable amenable group. A set $D\subset G$ is a $D$-set
if and only there exists a compact dynamical system $(X,\langle T_{g}\rangle_{g\in G})$,
points $x,y\in X$ with $x,y$ proximal and $y$ essentially recurrent
and an open neighborhood $U_{y}$ of $y$ such that

\[
D=\{g\in G:T_{g}(x)\in U_{y}\}.
\]
\end{thm}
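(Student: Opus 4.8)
The plan is to mimic the classical proof of the dynamical characterizations of IP-sets and Central sets (as in \cite{refBH}), using the translation given by the previous two lemmas together with the standard correspondence between members of idempotents in $\beta G$ and return-time sets in the universal system $(\beta G,\langle\lambda_g\rangle_{g\in G})$. In one direction, suppose $D$ is a $D$-set, i.e.\ $D\in q$ for some essential idempotent $q\in\beta G$. By the lemma characterizing essentially recurrent points, $q$ is an essentially recurrent point in $(\beta G,\langle\lambda_g\rangle)$. I would take $X=\beta G$, put $x=e$ (the principal ultrafilter at the identity) and $y=q$, let $U_y=\overline{D}=\cl{D}$, and observe that $\{g\in G:\lambda_g(e)=g\in\cl D\}=\{g\in G: D\in g\}=D$; moreover $x=e$ and $y=q$ are proximal in $\beta G$ because $\lambda_q(e)=q\cdot e=q$ and $\lambda_q(q)=q\cdot q=q$, so they have a common image under $\lambda_q$ and hence lie in a common orbit-closure point, giving proximality. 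This yields the required system, points, and neighborhood.

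For the converse, suppose we are given $(X,\langle T_g\rangle)$, proximal points $x,y$ with $y$ essentially recurrent, and an open $U_y\ni y$ with $D=\{g\in G:T_g(x)\in U_y\}$. The strategy is: (1) pass to the subsystem $\overline{O}(x,y)\subset X\times X$ generated by the diagonal orbit of $(x,y)$, with the map $\pi$ being projection to the second coordinate, which is a factor map onto $\overline{O}(y)$; (2) apply the lifting lemma (the penultimate lemma in the excerpt) to obtain an essentially recurrent $\pi$-lift $(x',y')$ of $y$ whose orbit closure contains no proper closed invariant subset mapping onto $\overline{O}(y)$ — minimality over the base forces, via proximality of $x$ and $y$, that $x'=y'$, so $y'$ is an essentially recurrent point of the original system that is a limit of $T_g(x)$'s and simultaneously "diagonal"; (3) translate $y'$ back: since $\overline{O}(x',y')$ maps onto $\overline{O}(y)$ and $y'$ is essentially recurrent with the ``no proper subsystem'' property, one shows $y'\in\overline{O}(x)$ and in fact $\{g: T_g(x)\in U_y\}$ lies in an idempotent ultrafilter all of whose members have positive upper density. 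Concretely, transport the dynamical picture to $\beta G$: the orbit map $g\mapsto T_g(x)$ extends to a continuous $G$-map $\theta:\beta G\to \overline{O}(x)$, pick an idempotent $q$ in $\theta^{-1}(\{y'\})\cap$ (a minimal-type set relative to essential recurrence) so that $q$ is an essentially recurrent point of $\beta G$, hence by the lemma an essential idempotent; then $D=\{g:T_g(x)\in U_y\}\supseteq \theta^{-1}(U_y)\cap G \in q$ shows $D$ is a $D$-set.

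The main obstacle is step (2)–(3) of the converse: producing the \emph{idempotent} essential ultrafilter containing $D$. Essential recurrence of a point in $\beta G$ is exactly essentiality of the corresponding idempotent (previous lemma), but the lifting lemma only hands us an essentially recurrent \emph{point}, not obviously an idempotent, and not obviously one sitting over $\theta^{-1}(U_y)$. The resolution is the standard one: the fiber $\theta^{-1}(\{y'\})$ is a nonempty compact subsemigroup of $\beta G$ when $y'$ is chosen in the image of $G$'s orbit-closure appropriately — more precisely, after arranging (via the proximality of $x,y$ and the minimality-over-base from the lifting lemma) that $y'$ itself is an essentially recurrent point with $\overline{O}(y')\subseteq\overline{O}(x)$, the set $\theta^{-1}(\overline{O}(y'))$ is a closed subsemigroup of $\beta G$ on which one runs Ellis's theorem to get an idempotent $q$, and a second application of the lifting lemma (or a compactness argument with the filter of essential-density sets) upgrades $q$ to be essentially recurrent, hence essential. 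One then checks $D\in q$ because $\theta(q)=y'\in U_y$ and continuity of $\theta$ gives $\theta^{-1}(U_y)\in q$ while $\theta^{-1}(U_y)\cap G\subseteq D$. Keeping track of which closed invariant sets are preserved under the lifts, and verifying that ``essentially recurrent'' is inherited at each stage, is the delicate bookkeeping; everything else parallels \cite{refBH} verbatim with ``syndetic/minimal'' replaced by ``positive upper density/essential.''
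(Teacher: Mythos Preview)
Your forward direction ($D$-set $\Rightarrow$ dynamical) is correct and in fact more economical than the paper's: you take $X=\beta G$, $x=e$, $y=q$, $U_y=\cl D$, whereas the paper builds the system inside $\{0,1\}^G$ with $x=1_D$ and $y=q\text{-}\lim T_g x$. Both work; yours trades metrizability of $X$ for a one-line verification.

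The converse is where your sketch has a genuine gap. Two issues. First, the claim in step~(2) that ``minimality over the base forces, via proximality, that $x'=y'$'' is not justified: proximality only puts \emph{some} diagonal point $(z,z)$ into $\overline{O}(x,y)$, and for essentially recurrent (as opposed to uniformly recurrent) $y$ there is no reason the minimal lift of $\overline{O}(y)$ must sit inside the diagonal. Second, and more seriously, you build the orbit map via $x$, namely $\theta:\beta G\to\overline{O}(x)$, $\theta(p)=p\text{-}\lim T_g x$. The fiber $\theta^{-1}(\{y'\})$ is \emph{not} a subsemigroup in general (from $p\text{-}\lim T_g x=y'$ one cannot conclude $p\text{-}\lim T_g y'=y'$), which is why you retreat to $\theta^{-1}(\overline{O}(y'))$; but then an idempotent there need not map into $U_y$, and your ``second application of the lifting lemma'' to upgrade it is exactly the step that needs the argument you have not supplied.

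The paper sidesteps both difficulties by applying the lifting lemma \emph{directly} to the factor map $\pi:\beta G\to\overline{O}(y)$, $\pi(p)=p\text{-}\lim T_g y$ (via $y$, not $x$). This choice is the point: now $\pi^{-1}(\{y\})$ \emph{is} a closed subsemigroup, because $\pi(pq)=p\text{-}\lim T_g\pi(q)=p\text{-}\lim T_g y=\pi(p)$. One takes the essentially recurrent lift $p_1$ of $y$ with $\overline{O}(p_1)$ a minimal lift of $\overline{O}(y)$, sets $I=\overline{O}(p_1)\cap\pi^{-1}(\{y\})$, and picks an idempotent $q\in I$ by Ellis. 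Since $\pi(q)=y$, the orbit closure $\overline{O}(q)\subseteq\overline{O}(p_1)$ still surjects onto $\overline{O}(y)$; minimality of the lift then forces $\overline{O}(q)=\overline{O}(p_1)$, so $q$ inherits essential recurrence from $p_1$ and is therefore an essential idempotent. Proximality is invoked only at the end, to pass from $q\text{-}\lim T_g y=y\in U_y$ to $D=\{g:T_g x\in U_y\}\in q$. Your product-system detour and the map $\theta$ are not needed.
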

\begin{proof}
Let $D=\{g\in G:T_{g}(x)\in U_{y}\}$ where $x$, $y$ and $U_{y,}$
are as in the formulation of the theorem. Since $x,y$ are proximal
$p\mbox{-lim}_{g\in G}T_{g}x=p\mbox{-lim}_{g\in G}T_{g}y$. Consider
a factor map $\pi:\beta G\to\overline{O}(y)$ defined by $p\to p\mbox{-lim}_{g\in G}T_{g}y$.
By previous lemma we can find in $\beta G$ an essentially recurrent
point $p_{1}$ which is $\pi$-lift of $y$, and whose orbit closure
is a minimal lift of $\overline{O}(y)$. We will show that $p_{1}$
can be replaced by an idempotent. Consider the set

\[
I=\{p\in\overline{O}(p_{1}):\pi(p)=y\}.
\]

It can be easily verified that $I$ is a closed sub semigroup of $\beta G$,
so it contains an idempotent $q$. Since $\pi(p)=y$, its orbit closure
maps onto $\overline{O}(y)$. By minimality of the lift $\overline{O}(p_{1})$,
$q$ has the same orbit closure as $p_{1}$, and hence is essentially
recurrent.

Since $\pi(q)=q\mbox{-lim}_{g\in G}T_{g}y=p\mbox{-lim}_{g\in G}T_{g}x$
and $U_{y}$ is a neighborhood of $y$ we have that $\{g\in G:T_{g}x\in U_{y}\}\in q$
and therefore $D\in q$.

Conversely we consider the dynamical system $(X,\langle T_{g}\rangle_{g\in G})$,
where $X=\{0,1\}^{G}$ and for each $g\in G$, $T_{g}:X\to X$ defined
by $T_{g}(f)=f\circ\lambda_{g}$. Let $D$ be a $\mbox{D}$-set. Then
there exists an essential idempotent $q\in G$ such that $D\in q$.
Consider the characteristic function $x=1_{D}\in X$. Let $y=q\mbox{-lim }T_{g}x$.
Since $q$ is an idempotent $y=q\mbox{-lim }T_{g}y=q\mbox{-lim }T_{g}(q\mbox{-lim }T_{g}(x))$.
Since $y$ is the image of $q$ via the factor map $\pi:\beta G\to\overline{O}(y)$
given by $p\to p\mbox{-lim}T_{g}y$ and $q$ is essentially recurrent
point in $(\beta G,\langle\hat{\sigma}_{x}\rangle_{x\in G})$, we
have that $q$ is essentially recurrent point.

Now let $U_{y}=\{z\in X:z(e)=y(e)\}$ is a neighborhood of $y$ in
$X$. Then $U_{y}$ is a neighborhood of $y$ in $X$. We note that
$y(e)=1$. In fact $y=q\mbox{-lim }T_{g}(x)$ so that $\{g\in G:T_{g}x\in U\}\in q$.
This implies that $\{g\in G:T_{g}x\in U_{y}\}\cap D\neq\emptyset$.
Let us choose $g\in D$ such that $T_{g}(x)\in U_{y}$. Then $y(e)=T_{g}(x)(e)=x(eg)=1$.
Thus given any $g\in G$,

\[
\begin{array}{ccc}
g\in D & \Leftrightarrow & x(g)=1\\
 & \Leftrightarrow & T_{g}(x)(e)=1\\
 & \Leftrightarrow & T_{g}(x)\in U_{y}.
\end{array}
\]

\end{proof}
This theorem implies that every Central set in an amenable group is
a D-set. We also have the following combined additive and multiplicative
property.
\begin{thm}
Every $D^{*}$-set in $(\N,+)$ is a $D$-set in $(\N,\cdot)$.
\end{thm}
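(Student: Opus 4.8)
The plan is to exploit the dynamical characterization of $D$-sets proved in the previous theorem together with the well-known fact that multiplicative translations on $\N$ are ``compatible'' with additive shifts in the bad sense only on a density-zero set. Concretely, suppose $A\subseteq\N$ is a $D^*$-set for $(\N,+)$; I want to show $A$ belongs to every essential idempotent of $(\beta\N,\cdot)$. Fix such a multiplicative essential idempotent $q$, and suppose for contradiction that $A\notin q$, so $\N\setminus A\in q$. The idea is to build, from the multiplicative essential idempotent $q$, an additive essential idempotent $p$ all of whose members have positive upper (Banach/F\o lner) density, such that $\N\setminus A\in p$ as well; this contradicts $A$ being a $D^*$-set additively.

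First I would recall (this is the standard argument behind ``IP$^*$ in $(\N,+)$ is IP in $(\N,\cdot)$'' and its Central/$D$ analogues, due in spirit to Bergelson--Hindman) that the key mechanism is: multiplication by a fixed $n$ preserves upper density up to the factor $1/n$, more precisely $\bar d(nE)\ge \bar d(E)/n$ for every $E\subseteq \N$, and conversely $\bar d(E/n)\ge \bar d(E\cap n\N)$ where $E/n=\{m:nm\in E\}$. Hence if $B$ has positive upper density and $n\in\N$, then $n^{-1}B$ has positive upper density whenever $B\cap n\N$ does; and one can always arrange (by passing within the multiplicative structure) to stay inside sets that meet $n\N$ densely. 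The point is that the set $B=\N\setminus A$ being a member of the \emph{multiplicative} essential idempotent $q$ means $B$ has positive upper density, and moreover $\{n: n^{-1}B\in q\}\in q$; iterating, one extracts from $q$ a sequence $\langle n_i\rangle$ such that all finite products lie in $B$ and the ``additive'' finite-sums-type combinatorics can be reproduced.

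The cleaner route, which I would actually carry out, is to work entirely on the dynamical side. By Theorem~2.13 applied to $(\N,\cdot)$, the set $B=\N\setminus A$, being central (indeed a member of an essential idempotent) multiplicatively, is a $D$-set for $(\N,\cdot)$, hence there is a dynamical system with $T$ an action of $(\N,\cdot)$, proximal points $x,y$, $y$ essentially recurrent (orbit closure measure saturated in the multiplicative sense), and a neighborhood $U_y$ with $B=\{n: T_n x\in U_y\}$. Now I would reinterpret the \emph{multiplicative} F\o lner density along the sets $F_N=\{1,2,\dots,N\}$ — here the crucial computation is that for $(\N,\cdot)$ the relevant averaging sets can be taken to be dilation-invariant intervals, and a measure that is $T$-invariant for the multiplicative action, when pushed through the embedding $n\mapsto$ (its prime factorization), yields additive invariance on a sub-system. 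From the essential recurrence of $y$ I would produce an \emph{additive} essential idempotent $p$ with $B\in p$: this is exactly where the density bookkeeping $\bar d(E/n)$ versus $\bar d(E)$ is invoked, to check that the measure witnessing multiplicative measure-saturation also witnesses positive additive upper density for every member of $p$.

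The main obstacle is precisely this last transfer: showing that positive \emph{multiplicative} upper density of each member of $q$ forces positive \emph{additive} upper density of the members of the additive idempotent $p$ one builds from it. A priori a set can have positive multiplicative density and zero additive density (e.g. $\{2^k:k\in\N\}$ has positive multiplicative Banach density but additive density zero), so the argument cannot be a soft ``densities are comparable'' statement; it must use that $p$ is assembled from $q$ in a way that, at each stage, intersects the relevant arithmetic progressions densely, using $\bar d(n^{-1}E)>0$ whenever $\bar d(E\cap n\N)>0$. I would handle this by the standard iterated selection: choose $n_1<n_2<\cdots$ inductively with all products in $B$ and with the tail sets $B_i=\{m: (\prod_{j\le i}n_j)\cdot m\in B\}$ still of positive additive density, and let $p$ be an additive idempotent in $\bigcap_i \mathrm{cl}\{\text{finite sums from }\langle\log n_j\rangle_{j>i}\}$ transported back — and then verify member-by-member that positive additive density is inherited. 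This selection-and-verification step is the heart of the proof; the rest is bookkeeping on top of Theorem~2.13.
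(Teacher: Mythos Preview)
The paper states this theorem without proof. The intended argument, given its placement immediately after the remark ``This theorem implies that every Central set in an amenable group is a $D$-set'', is the short chain
\[
\text{additive } D^{*}\ \Longrightarrow\ \text{additive Central}^{*}\ \Longrightarrow\ \text{multiplicative Central}\ \Longrightarrow\ \text{multiplicative } D.
\]
The first implication holds because every minimal idempotent of $(\beta\N,+)$ is essential (members of minimal idempotents are piecewise syndetic, hence of positive upper Banach density); the last is exactly that preceding remark applied to the amenable semigroup $(\N,\cdot)$; and the middle implication is the classical result (see \cite{refBH}, \cite{refHS}) that every additively Central$^{*}$ set is multiplicatively Central.

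Your proposal, by contrast, has a genuine gap and in fact aims at the wrong target. You set out to show that an additive $D^{*}$-set $A$ lies in \emph{every} multiplicative essential idempotent, which would make $A$ multiplicatively $D^{*}$; the theorem only asks that $A$ lie in \emph{some} such idempotent. Your strategy --- start from an arbitrary multiplicative essential idempotent $q$ with $\N\setminus A\in q$ and manufacture from it an \emph{additive} essential idempotent $p$ with $\N\setminus A\in p$ --- runs head-on into the obstacle you yourself flag: positive multiplicative F\o lner density does not imply positive additive upper Banach density (your own example $\{2^{k}\}$ shows this). Nothing in the ``iterated selection'' paragraph overcomes this; the construction never actually produces an additive idempotent, and the phrase ``finite sums from $\langle\log n_{j}\rangle$'' is not meaningful in $\N$. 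The workable direction is the opposite one: rather than starting from an arbitrary multiplicative essential idempotent and trying to pull it back to the additive side, one \emph{exhibits} a particular multiplicative idempotent --- indeed a multiplicative minimal one, which is automatically essential --- that contains every additive $D^{*}$-set, via the Central$^{*}$ route above.
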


\section{Mixing Properties}

In this section we will discuss now the connections between essential
idempotents and unitary actions of countable amenable groups. We shall
consider a unitary representation $(U_{g})_{g\in G}$ on the Hilbert
space $L^{2}(X,{\cal B},\mu)$ defined by $U_{g}f(x)=f(T_{g}x)$.
\begin{definition}
Let $(U_{g})_{g\in G}$ be a unitary representation of a group $G$
on a separable Hilbert space $H$.

(i) A vector $x\in H$ is called compact if the set $\{U_{g}x:g\in G\}$
is totally bounded in $H$.

(ii) The representation $(U_{g})_{g\in G}$ is called weak mixing
if there are no nonzero compact vectors.
\end{definition}

\begin{thm}
Given a unitary representation $(U_{g})_{g\in G}$ of a group $G$
on a separable Hilbert space $H$ let
\[
H_{c}=\{x\in H:f\,\, is\,\, compact\,\, with\,\, respect\,\, to\,\,(U_{g})_{g\in G}\}.
\]
Then the restriction $(U_{g})_{g\in G}$ to the invariant subspace
$H_{wm}=H_{c}^{\bot}$ is weak mixing.
\end{thm}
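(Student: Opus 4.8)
The plan is to establish the orthogonal splitting $H=H_c\oplus H_{wm}$ at the level of $G$-modules and then observe that weak mixing of the summand $H_{wm}$ is almost formal. Concretely, I would proceed in three steps: (1) show that $H_c$ (the set of compact vectors) is a closed $G$-invariant linear subspace of $H$; (2) deduce that $H_{wm}=H_c^{\perp}$ is a closed $G$-invariant subspace, so that $(U_g|_{H_{wm}})_{g\in G}$ is a genuine unitary representation on $H_{wm}$; (3) show that this restricted representation has no nonzero compact vectors.

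For Step (1), linearity: if $x,y\in H_c$ then $\{U_g(x+y):g\in G\}\subseteq\{U_gx:g\in G\}+\{U_gy:g\in G\}$, a sum of two totally bounded subsets of $H$ is totally bounded (combine finite $\varepsilon/2$-nets), and any subset of a totally bounded set is totally bounded; closure under scalars is clear. Invariance: for $h\in G$ the orbit $\{U_gU_hx:g\in G\}$ equals $\{U_gx:g\in G\}$ as a set, since $g\mapsto gh$ (equivalently $g\mapsto hg$) permutes $G$; hence $U_hx$ is compact whenever $x$ is. Closedness: let $x\in\overline{H_c}$ and $\varepsilon>0$; choose $x_0\in H_c$ with $\|x-x_0\|<\varepsilon/3$ and, using total boundedness of the orbit of $x_0$, points $g_1,\dots,g_k$ so that the balls $B(U_{g_j}x_0,\varepsilon/3)$ cover $\{U_gx_0:g\in G\}$. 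For arbitrary $g\in G$ pick $j$ with $\|U_gx_0-U_{g_j}x_0\|<\varepsilon/3$; since each $U_g$ is an isometry, $\|U_gx-U_{g_j}x\|\le\|U_gx-U_gx_0\|+\|U_gx_0-U_{g_j}x_0\|+\|U_{g_j}x_0-U_{g_j}x\|<\varepsilon$, so $\{U_gx:g\in G\}$ admits a finite $\varepsilon$-net and $x\in H_c$.

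Step (2) is the standard fact that the orthogonal complement of an invariant subspace of a group of unitaries is again invariant: for $x\in H_c^{\perp}$, $z\in H_c$, $g\in G$ we have $\langle U_gx,z\rangle=\langle x,U_g^{*}z\rangle=\langle x,U_{g^{-1}}z\rangle=0$ because $U_{g^{-1}}z\in H_c$ by Step (1). For Step (3), observe that whether a vector $x\in H_{wm}$ is compact for $(U_g|_{H_{wm}})_{g\in G}$ depends only on the orbit $\{U_gx:g\in G\}$, which is literally the same subset of $H$ whether computed in $H_{wm}$ or in $H$, and total boundedness is intrinsic to that metric subset; hence such an $x$ lies in $H_c$, so $x\in H_c\cap H_c^{\perp}=\{0\}$ and $x=0$. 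Therefore $(U_g|_{H_{wm}})_{g\in G}$ is weak mixing. I do not expect a genuine obstacle here: the only points needing a little care are the closedness of $H_c$ (the $\varepsilon/3$-argument above) and its stability under addition (totally bounded plus totally bounded is totally bounded); everything else is bookkeeping.
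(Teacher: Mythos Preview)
Your proof is correct and is the standard argument for this classical decomposition. Note that the paper does not actually supply a proof of this theorem; it is stated as a known structural result and used as background for the subsequent lemmas, so there is nothing in the paper to compare your argument against.
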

Recall that in a Hilbert space the norm convergence $\mbox{lim}x_{n}=y$
is equivalent to the conjunction of the weak convergence of $x_{n}$
to $y$ and the convergence of norms $\mbox{lim}\Vert x_{n}\Vert=\Vert y\Vert$.
Since any unitary operator $U$ is an isometry, the relation $p$-$\mbox{lim}U_{g}x=x$
for some $p\in\beta G$ holds in the weak topology if and only if
it holds in the strong topology. The following lemma follows from
\cite[Theorem 4.3]{refB}.
\begin{lemma}
\label{id con1}If $p\in\beta G$ is an idempotent then for any $x\in{\cal H}_{c}$
one has $p$-\textup{$\mbox{lim}U_{g}x=x$}.
\end{lemma}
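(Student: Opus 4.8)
The plan is to realise the idempotent $p$, via the unitary action, as a self-map of the orbit closure of $x$ that is an isometry, and then to play off two facts against each other: an isometry of a metric space is injective, while an idempotent self-map fixes every point of its image.

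First I would fix $x\in{\cal H}_{c}$ and put $K=\overline{\{U_{g}x:g\in G\}}$, the norm closure of the orbit of $x$. Since $x$ is a compact vector this orbit is totally bounded, so $K$ is a compact metric space; and since $U_{g}U_{h}=U_{gh}$ with each $U_{g}$ a continuous isometry, $U_{g}K\subseteq K$ for every $g$ and $U_{g}|_{K}$ is an isometry of $K$. Hence, for every $q\in\beta G$ and every $z\in K$, the family $(U_{g}z)_{g\in G}$ takes values in the compact space $K$, so $\Phi_{q}(z):=q\text{-}\lim_{g\in G}U_{g}z$ exists in norm and lies in $K$; this defines $\Phi_{q}\colon K\to K$. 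The routine observation is that each $\Phi_{q}$ is an isometry of $K$: since $(a,b)\mapsto\Vert a-b\Vert$ is continuous on $H\times H$ and the pairs $(U_{g}z,U_{g}w)$ stay in $K\times K$, one gets $\Vert\Phi_{q}(z)-\Phi_{q}(w)\Vert=q\text{-}\lim_{g}\Vert U_{g}z-U_{g}w\Vert=\Vert z-w\Vert$; in particular $\Phi_{q}$ is injective.

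Next I would prove that $\Phi_{p}\circ\Phi_{p}=\Phi_{p}$ whenever $p$ is idempotent. For $z\in K$, moving each $U_{s}$ inside an ultrafilter limit by continuity,
\[
\Phi_{p}\bigl(\Phi_{p}(z)\bigr)=p\text{-}\lim_{s}U_{s}\bigl(p\text{-}\lim_{t}U_{t}z\bigr)=p\text{-}\lim_{s}\,p\text{-}\lim_{t}U_{st}z=p\text{-}\lim_{r}U_{r}z=\Phi_{p}(z),
\]
where the last equality is the identity $p\text{-}\lim_{s}p\text{-}\lim_{t}f(st)=p\text{-}\lim_{r}f(r)$, valid for any $f$ into a compact Hausdorff space when $p=p\cdot p$; this is just the description of the product on $\beta G$ recalled in the introduction, rewritten for limits. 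Granting this, set $y:=\Phi_{p}(x)=p\text{-}\lim_{g}U_{g}x$ and note that $x=U_{e}x\in K$. Then $\Phi_{p}(y)=\Phi_{p}\bigl(\Phi_{p}(x)\bigr)=\Phi_{p}(x)=y$, so $\Phi_{p}(x)=y=\Phi_{p}(y)$; injectivity of $\Phi_{p}$ now forces $x=y$, i.e. $p\text{-}\lim_{g}U_{g}x=x$. By the remark preceding the lemma this equality then holds in the weak topology as well.

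The genuinely delicate point is the iterated-limit identity above, together with keeping the order of iteration consistent with the convention $\lambda_{q}(p)=q\cdot p$ adopted here for the extension of the product to $\beta G$; this is the step I would spell out carefully, everything else being compactness of $K$ and the elementary calculus of ultrafilter limits. (Alternatively, after observing that a compact vector is exactly an almost periodic point of the action $(U_{g})_{g\in G}$, one may simply quote \cite[Theorem 4.3]{refB}.)
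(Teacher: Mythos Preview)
Your argument is correct: the orbit closure $K$ is compact, each $\Phi_{q}$ is a well-defined isometry of $K$, idempotence of $p$ gives $\Phi_{p}\circ\Phi_{p}=\Phi_{p}$, and injectivity then forces $\Phi_{p}(x)=x$. This is precisely the classical Ellis--enveloping-semigroup argument.

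The paper itself does not supply a proof at all; it simply records that the lemma follows from \cite[Theorem~4.3]{refB}. Your write-up therefore does strictly more than the paper: you spell out the mechanism behind the cited theorem rather than invoking it as a black box, and you even note the citation as an alternative in your last sentence. The only cosmetic point is that the care you promise about matching the iterated-limit order to the paper's convention $\lambda_{q}(p)=q\cdot p$ is, for this particular lemma, unnecessary: since $p=p\cdot p$, either order of iteration yields the same conclusion, so the convention cannot cause trouble here.
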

The above statement can be reversed for essential idempotents.
\begin{lemma}
\label{id con 2}If $p\in\beta G$ is an essential idempotent and
$p$-\textup{$\mbox{lim}U_{g}x=x$} for some $x\in{\cal H}$ then
$x\in{\cal H}_{c}$.\end{lemma}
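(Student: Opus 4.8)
The plan is to split $\mathcal{H}$ along the orthogonal decomposition $\mathcal{H}=\mathcal{H}_{c}\oplus\mathcal{H}_{wm}$ and reduce the statement to its weak mixing part, where a mean ergodic argument finishes it. Write $x=x_{c}+x_{w}$ with $x_{c}\in\mathcal{H}_{c}$ and $x_{w}\in\mathcal{H}_{wm}=\mathcal{H}_{c}^{\bot}$; both summands lie in closed $\langle U_{g}\rangle$-invariant subspaces. Since closed balls of a Hilbert space are weakly compact, the weak limits $p\mbox{-lim}\,U_{g}x_{c}$ and $p\mbox{-lim}\,U_{g}x_{w}$ both exist, so from the hypothesis $p\mbox{-lim}\,U_{g}x=x$ we obtain $p\mbox{-lim}\,U_{g}x_{w}=p\mbox{-lim}\,U_{g}x-p\mbox{-lim}\,U_{g}x_{c}=x-x_{c}=x_{w}$, using Lemma~\ref{id con1} to evaluate $p\mbox{-lim}\,U_{g}x_{c}=x_{c}$. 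Hence it suffices to prove: if $v\in\mathcal{H}_{wm}$ and $p\mbox{-lim}\,U_{g}v=v$ for an essential idempotent $p$, then $v=0$.

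The heart of the argument is the fact, which I would establish first, that if $\langle U_{g}\rangle_{g\in G}$ restricted to a closed invariant subspace $K$ is weak mixing, then for \emph{every} left \folner\ sequence $(F_{n})_{n\in\N}$ and every $v\in K$ one has $\frac{1}{|F_{n}|}\sum_{g\in F_{n}}|\langle U_{g}v,v\rangle|^{2}\to0$. To prove this I would pass to $K\otimes\overline{K}$ with the diagonal unitary representation $g\mapsto U_{g}\otimes\overline{U_{g}}$, identified with the Hilbert--Schmidt operators on $K$, on which $g$ acts by $T\mapsto U_{g}TU_{g}^{*}$. Weak mixing on $K$ (no nonzero compact vector) is then equivalent to this representation having no nonzero invariant vector: an invariant $T$ is a nonzero Hilbert--Schmidt operator commuting with all $U_{g}$, and any nonzero spectral projection of the compact positive operator $T^{*}T$ cuts out a finite-dimensional invariant subspace of $K$ whose nonzero vectors would be compact. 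Now the mean ergodic theorem for amenable groups --- valid for an arbitrary left \folner\ sequence, since on the orthogonal complement of the invariant vectors one reduces to coboundaries $y-U_{h}y$, whose averages are controlled by $|F_{n}\triangle F_{n}h|/|F_{n}|\to0$ --- gives $\frac{1}{|F_{n}|}\sum_{g\in F_{n}}(U_{g}\otimes\overline{U_{g}})\xi\to P\xi$, with $P$ the projection onto the invariant vectors. Taking $\xi=v\otimes\bar v$, using $\langle(U_{g}\otimes\overline{U_{g}})(v\otimes\bar v),v\otimes\bar v\rangle=|\langle U_{g}v,v\rangle|^{2}$ and $P=0$, yields the claim.

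With this in hand the conclusion is immediate. Suppose $v\in\mathcal{H}_{wm}$, $p\mbox{-lim}\,U_{g}v=v$, and $v\neq0$. Since $p\mbox{-lim}\,\langle U_{g}v,v\rangle=\langle v,v\rangle=\|v\|^{2}>0$, the set $B=\{g\in G:\mbox{Re}\,\langle U_{g}v,v\rangle>\frac{1}{2}\|v\|^{2}\}$ lies in $p$, and in particular $B\subseteq\{g\in G:|\langle U_{g}v,v\rangle|^{2}>\frac{1}{4}\|v\|^{4}\}$. As $p$ is an essential idempotent, $B$ has positive upper density, so there is a \folner\ sequence $(F_{n})_{n\in\N}$ with $\limsup_{n}|B\cap F_{n}|/|F_{n}|>0$. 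But Chebyshev's inequality together with the fact just proved gives
\[
\frac{|B\cap F_{n}|}{|F_{n}|}\leq\frac{4}{\|v\|^{4}}\cdot\frac{1}{|F_{n}|}\sum_{g\in F_{n}}|\langle U_{g}v,v\rangle|^{2}\longrightarrow0,
\]
a contradiction. Hence $v=0$, and therefore $x=x_{c}\in\mathcal{H}_{c}$. I expect the main obstacle to be the tensor-square step: identifying weak mixing with the absence of invariant vectors in $K\otimes\overline{K}$ and invoking the mean ergodic theorem for general \folner\ sequences; the remaining steps are routine.
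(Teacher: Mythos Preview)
Your proof is correct, but it is considerably more elaborate than the paper's and in effect inverts the paper's order of deduction. The paper gives a three-line direct argument that never touches the splitting $\mathcal{H}=\mathcal{H}_{c}\oplus\mathcal{H}_{wm}$: for $\epsilon>0$ the set $E=\{g\in G:\|U_{g}x-x\|<\epsilon/2\}$ lies in $p$ (using that for isometries the weak and strong $p$-limits to $x$ agree), hence $E$ has positive upper density; by the triangle inequality and unitarity, $E^{-1}E\subseteq\{g:\|U_{g}x-x\|<\epsilon\}$, and since a set of positive upper density in an amenable group has a syndetic difference set, finitely many translates of this $\epsilon$-return set cover $G$, so the orbit $\{U_{g}x:g\in G\}$ is covered by finitely many $\epsilon$-balls. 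Thus $x$ is compact.

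Your route instead establishes, via the tensor square and the mean ergodic theorem, that an essential idempotent annihilates $\mathcal{H}_{wm}$ weakly --- which is precisely the content of the lemma the paper proves \emph{next}, and which the paper derives \emph{from} the present one in two lines. What your approach gains is an independent proof of that vanishing (and, as a byproduct, the density-one statement that reappears in the later weak-mixing discussion); what it costs is reliance on the decomposition theorem, the Hilbert--Schmidt identification of $K\otimes\overline{K}$, and the mean ergodic theorem for arbitrary \folner\ sequences, none of which the paper's argument needs. The single combinatorial input in the paper's proof is that positive upper density forces a syndetic difference set.
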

\begin{proof}
For $\epsilon>0$ consider the set $E=$$\{g\in G:\Vert U_{g}x-x\Vert<\epsilon/2\}$.
Since $p$-$\mbox{lim}U_{g}x=x$, we have $E\in p.$ Note that for
any $g_{1},g_{2}\in E$ one has

\[
\Vert T_{g_{1}g_{2}^{-1}}x-x\Vert=\Vert T_{g_{1}}x-T_{g_{2}}x\Vert\leq\Vert T_{g_{1}}x-x\Vert+\Vert T_{g_{2}}x-x\Vert\leq\epsilon.
\]
Since $E$ has positive upper Banach density, this implies that $EE^{-1}$
is syndetic. Since $U$ is an isometry we have covered the orbit of
$x$ by finitely many $\epsilon-$balls, hence the orbit of $x$ is
precontract, so that $x\in H_{c}$.\end{proof}
\begin{lemma}
If $p\in\beta G$ is an essential idempotent then for any $x\in H_{wm}$
one has $p$-\textup{$\mbox{lim}U_{g}x=0$} weak.
\end{lemma}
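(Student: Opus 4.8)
The plan is to show that the vector $y:=p\mbox{-}\lim_{g\in G}U_{g}x$, the ultrafilter limit taken in the weak topology of $H$, is at the same time a weakly mixing vector and a compact vector, and hence is $0$. First I would check that $y$ is well defined: the orbit $\{U_{g}x:g\in G\}$ lies in the closed ball of radius $\|x\|$, which is weakly compact, so the limit of $U_{g}x$ along the ultrafilter $p$ exists in the weak topology. Next I would observe that $y\in H_{wm}$: since $H_{wm}=H_{c}^{\bot}$ is an invariant subspace of the representation, $U_{g}x\in H_{wm}$ for every $g\in G$, and $H_{wm}$, being a norm-closed linear subspace, is weakly closed, so the weak limit $y$ of the $U_{g}x$ again lies in $H_{wm}$.

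The key step is to prove that $p\mbox{-}\lim_{h\in G}U_{h}y=y$. Each $U_{h}$ is a bounded operator, hence weakly continuous, so it commutes with the weak $p$-limit: $U_{h}y=p\mbox{-}\lim_{g}U_{h}U_{g}x$ in the weak topology. As $U_{h}U_{g}$ is again one of the operators $U_{k}$, taking the weak $p$-limit over $h$ gives rise to an iterated $p$-limit of the vectors $U_{k}x$; invoking the identity that expresses such an iterated $p$-limit as a $(p\cdot p)$-limit in $\beta G$ (which uses that every $\lambda_{q}$ is continuous on $\beta G$), together with the idempotency $p\cdot p=p$, this collapses to $p\mbox{-}\lim_{k}U_{k}x=y$. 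Hence $p\mbox{-}\lim_{h}U_{h}y=y$ in the weak topology, and by the remark preceding Lemma~\ref{id con1}, since the $U_{g}$ are isometries the same relation holds in the strong topology. Since $p$ is an essential idempotent, Lemma~\ref{id con 2} applied to the vector $y$ now yields $y\in H_{c}$.

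Finally, $y\in H_{c}$ together with $y\in H_{wm}=H_{c}^{\bot}$ forces $\langle y,y\rangle=0$, hence $y=0$; that is, $p\mbox{-}\lim_{g}U_{g}x=0$ in the weak topology, as claimed.

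The step needing the most care is the collapse of the iterated $p$-limit in the key step above: one has to keep track of whether $g\mapsto U_{g}$ is a homomorphism or, with the paper's conventions $T_{g}\circ T_{h}=T_{gh}$ and $U_{g}f=f\circ T_{g}$, an anti-homomorphism of $G$, and to use the correspondingly correct bracketing in the $\beta G$ iterated-limit identity together with the continuity properties of the left and right translations on $\beta G$. Once that bookkeeping is settled, the idempotency of $p$ does the work and the remainder of the argument is routine, using only Lemma~\ref{id con 2} and the weak/strong remark.
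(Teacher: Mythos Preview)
Your proof is correct and follows the same skeleton as the paper's: define $y$ as the weak $p$-limit of $U_{g}x$, observe $y\in H_{wm}$, establish $p\text{-}\lim U_{g}y=y$, invoke Lemma~\ref{id con 2} to get $y\in H_{c}$, and conclude $y=0$. The only difference is in the third step: the paper obtains $p\text{-}\lim U_{g}y=y$ by citing Lemma~\ref{id con1} (which, as stated, applies only to vectors already known to lie in $H_{c}$, so the citation is loose), whereas you derive it directly from the idempotency of $p$ via the iterated $p$-limit identity in $\beta G$ --- your justification here is in fact the more careful of the two.
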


\begin{proof}
By compactness of the ball of radios $\Vert x\Vert$ around zero
in the weak topology,there exists some y such that $p$-$\mbox{lim}U_{g}x=y$
weak. Since $H_{wm}$ is closed and invariant under $U$, we have
$y\in H_{wm}.$ On the other hand, $p$ is an idempotent, by Lemma
\ref{id con1} $p$-$\mbox{lim}U_{g}y=y$. By Lemma \ref{id con 2},
$y\in H_{c}$. This implies $y=0$.
\end{proof}
One can show that unitary representation $(U_{g})_{g\in G}$ is weak
mixing if and only if in the decomposition $H=H_{c}\oplus H_{wm}$
one has $H_{c}=\{0\}$. Let now $(X,{\cal B},\mu,\langle T_{g}\rangle_{g\in G})$
be an invertible weakly mixing system. It can be easily checked that
in this case the unitary operator induced by $\langle T_{g}\rangle_{g\in G}$
on $L^{2}(\mu)$ is weakly mixing in the above sense on the orthocomplement
of the space of constant functions. This allows us to work with the
unitary operator $(U_{g})_{g\in G}$ acting on the Hilbert space $H$
as $L_{0}^{2}(X,{\cal B},\mu)=\{f\in L^{2}(X,{\cal B},\mu):\int fd\mu=0\}$.
\begin{rem}
It follows from the previous discussions that an invertible probability
measure preserving system $(X,{\cal B},\mu,\langle T_{g}\rangle_{g\in G})$
is weakly mixing if and only if for any $A,B\in{\cal B}$ and any
essentially recurrent idempotent $p$, $p$-$\mbox{lim}_{g}\mu(A\cap T_{g}B)=\mu\left(A\right)\mu\left(B\right)$.
\end{rem}
Let us recall here deep Theorem of Lindenstrauss for our purpose.
\begin{definition}
A sequence of sets $F_{n}$ will be said to be tempered if for some
$C>0$ and all $n$, $|{\displaystyle \bigcup_{k\leq n}F_{k}^{-1}F_{n+1}|}\leq C|F_{n+1}|$.
\end{definition}
\begin{thm}[Lindenstrauss Pointwise ergodic theorem]
Let G be an amenable group acting ergodically on a measure space
$(X,{\cal B},\mu)$, and let $F_{n}$ be a tempered \folner\ sequence. Then for any $f\in L^{1}(\mu)$,

\[
\mbox{lim}_{n}\frac{1}{|F_{n}|}\sum_{g\in F_{n}}f(gx)=\int f\left(x\right)d\mu\left(x\right)\mbox{ a. e.}
\]

\end{thm}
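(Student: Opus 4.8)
The plan is to follow the classical three-step scheme for pointwise ergodic theorems---a maximal inequality, convergence on a dense class, and the Banach principle to glue them together---keeping in mind that for a general amenable group it is the maximal inequality that carries all the difficulty and is precisely where temperedness is used. Write $A_{n}f(x)=\frac{1}{|F_{n}|}\sum_{g\in F_{n}}f(gx)$ for the \folner\ averages and set
\[
f^{*}(x)=\sup_{n}A_{n}|f|(x).
\]
The heart of the matter is the weak-type $(1,1)$ estimate
\[
\mu(\{x:f^{*}(x)>\lambda\})\le\frac{C}{\lambda}\Vert f\Vert_{1}
\]
for all $f\in L^{1}(\mu)$ and $\lambda>0$, with $C$ depending only on the temperedness constant.

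First I would reduce this maximal inequality, by a transference argument in the spirit of Calder\'on, to a purely combinatorial covering statement on $G$ itself: it suffices to show that any finite family of left translates $\{F_{n(x)}x\}$ of \folner\ sets admits a subfamily covering a fixed proportion of its union with overlap bounded by $C$. The main obstacle---and the true content of the theorem---is that the usual deterministic Vitali selection fails for non-abelian groups lacking any doubling condition. The plan is to replace it with a probabilistic covering lemma: process the candidate tiles from the largest index downward, retaining each selected tile independently subject to a random threshold, and estimate the \emph{expected} multiplicity of the resulting cover. The temperedness hypothesis $|\bigcup_{k\le n}F_{k}^{-1}F_{n+1}|\le C|F_{n+1}|$ is exactly the bound that keeps the expected overlap of a smaller tile by previously selected larger tiles uniformly controlled, producing a cover of positive density with multiplicity $\le C$ in expectation, hence for some realization. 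I expect this random covering lemma to be by far the most delicate step, and everything else to be comparatively routine once it is in place.

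With the covering lemma established, the weak-type maximal inequality follows by integrating the covering estimate against $|f|$ and transferring back to the action. For convergence on a dense class I would invoke the mean ergodic theorem for amenable groups: since $F_{n}$ is \folner, $A_{n}f\to\int f\,d\mu$ in $L^{2}(\mu)$ for every $f\in L^{2}(\mu)$ by ergodicity, which gives convergence in measure and hence almost everywhere along a subsequence. Moreover, for a bounded coboundary $f=h-U_{g}h$ with $h\in L^{\infty}$ one has the uniform bound $|A_{n}f(x)|\le\frac{|F_{n}\triangle gF_{n}|}{|F_{n}|}\Vert h\Vert_{\infty}\to0$, so such $f$ together with the constants have everywhere-convergent averages; since invariants and coboundaries span a dense subspace of $L^{2}(\mu)$, this is a dense class in $L^{1}(\mu)$.

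Finally the Banach principle closes the argument. The oscillation $\Omega f=\limsup_{n}A_{n}f-\liminf_{n}A_{n}f$ is dominated by $2f^{*}$, so for a decomposition $f=f_{1}+f_{2}$ with $f_{1}$ in the dense class one has
\[
\mu(\{x:\Omega f(x)>\lambda\})=\mu(\{x:\Omega f_{2}(x)>\lambda\})\le\frac{2C}{\lambda}\Vert f_{2}\Vert_{1}.
\]
Letting $\Vert f_{2}\Vert_{1}\to0$ forces $\Omega f=0$ almost everywhere, so $A_{n}f$ converges a.e.; comparing the a.e. limit with the limit in measure identifies it as $\int f\,d\mu$, first for $f\in L^{2}(\mu)$ and then, by density and the maximal inequality, for every $f\in L^{1}(\mu)$. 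This would complete the proof.
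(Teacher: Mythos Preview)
The paper does not prove this theorem at all: it merely \emph{recalls} it from Lindenstrauss's work (``Let us recall here deep Theorem of Lindenstrauss for our purpose'') and then uses it as a black box in the proofs of Theorem~3.9 and Theorem~3.13. So there is no ``paper's own proof'' to compare against---the intended argument here is simply a citation of \cite{refLi}.

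That said, your outline is a faithful high-level sketch of Lindenstrauss's original argument: the three-step scheme (maximal inequality, dense class, Banach principle) is correct, and you have correctly identified the random covering lemma as the crux and the precise place where temperedness enters. If you were asked to actually supply a proof rather than a citation, this would be the right plan, though of course the probabilistic covering lemma itself is several pages of careful work and your sketch leaves its details (the martingale/expectation bookkeeping, the passage from expected multiplicity to an actual realization) entirely unspecified. For the purposes of this paper, however, a reference to Lindenstrauss suffices.
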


\begin{thm}
An invertible measure preserving system $(X,{\cal B},\mu,\langle T_{g}\rangle_{g\in G})$
is ergodic if and only if for any $A,B\in{\cal B}$ and $\epsilon>0$
the set $R_{A,B}^{\epsilon}=\{g\in G:\mu(A\cap T_{g}B)>\mu\left(A\right)\mu\left(B\right)-\epsilon\}$
belongs to ${\cal D}_{+}^{*}$.
\end{thm}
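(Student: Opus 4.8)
The plan is to characterize ergodicity via the behavior of the averages $\mu(A\cap T_gB)$ over a \folner\ sequence and then translate ``positive-density-many good returns'' into membership in ${\cal D}^*_+$. First I would prove the forward direction: assume $(X,{\cal B},\mu,\langle T_g\rangle_{g\in G})$ is ergodic. Fix $A,B\in{\cal B}$ and $\epsilon>0$. The key input is the Lindenstrauss Pointwise Ergodic Theorem: choosing a tempered \folner\ sequence $(F_n)$ (every amenable group has one) and applying the theorem to $f=1_B$ at the point $x$, then integrating $1_A$ against the resulting averages and using the dominated convergence theorem, one gets
\[
\lim_n\frac1{|F_n|}\sum_{g\in F_n}\mu(A\cap T_gB)=\mu(A)\mu(B).
\]
Hence for each $n$ large, the set $R^\epsilon_{A,B}\cap F_n$ occupies a proportion of $F_n$ bounded below: if the proportion of $g\in F_n$ with $\mu(A\cap T_gB)\le\mu(A)\mu(B)-\epsilon$ stayed bounded away from $0$ along a subsequence, the Cesàro averages could not converge to $\mu(A)\mu(B)$ (the contribution from those $g$ is at most $\mu(A)\mu(B)-\epsilon$ while the rest is at most $1$, giving a limsup strictly below $\mu(A)\mu(B)$ unless $\mu(A)\mu(B)=1$, and the trivial cases are handled separately). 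This shows $R^\epsilon_{A,B}$ has positive upper density, so it is an IP$_+$-type large set; but we need more, namely that it meets \emph{every} essential idempotent after a translation.

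For the membership in ${\cal D}^*_+$, I would argue as follows. Suppose toward a contradiction that $R^\epsilon_{A,B}\notin{\cal D}^*_+$, i.e.\ for every $g\in G$ the translate $g^{-1}R^\epsilon_{A,B}$ fails to belong to some essential idempotent; equivalently (since ${\cal D}^*_+=\bigcup_{g}(g{\cal D}^*)$ and ${\cal D}^*$ is the family of sets in every essential idempotent), $G\setminus R^\epsilon_{A,B}$ is a $D$-set after translation—more precisely its complement contains, for suitable shifts, members of essential idempotents. The cleaner route is to use the dynamical characterization (Theorem 2.12): a $D$-set is exactly a set of the form $\{g:T_g x\in U_y\}$ with $x,y$ proximal and $y$ essentially recurrent. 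Using that every essential idempotent $q$ satisfies $q$-$\lim_g\mu(A\cap T_gB)=\mu(A)\mu(B)$ when the system is weakly mixing, and more generally that essential idempotents can be used to compute such limits in the Kronecker/ergodic decomposition, one sees that the ``bad'' set $B^\epsilon=\{g:\mu(A\cap T_gB)\le\mu(A)\mu(B)-\epsilon\}$ cannot lie in any essential idempotent, hence its complement $R^\epsilon_{A,B}$ lies in \emph{every} essential idempotent, so $R^\epsilon_{A,B}\in{\cal D}^*\subset{\cal D}^*_+$. Here the ergodicity (rather than weak mixing) only guarantees Cesàro convergence, not $q$-$\lim$ convergence along arbitrary idempotents, so the argument must instead invoke: for an essential idempotent $q$, any member $E\in q$ has positive upper density, hence by the forward-direction computation $E\cap R^\epsilon_{A,B}\ne\emptyset$ is not enough—we genuinely need $R^\epsilon_{A,B}\in q$. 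This is obtained by noting $G\setminus R^\epsilon_{A,B}=B^\epsilon$; if $B^\epsilon\in q$ then $B^\epsilon$ has positive upper density, but then averaging $\mu(A\cap T_gB)$ over a \folner\ sequence witnessing that density gives a limit $\le\mu(A)\mu(B)-\epsilon$, contradicting the Lindenstrauss-based equality above applied along that same density-witnessing \folner\ sequence (here one uses that any \folner\ sequence can be refined to a tempered one along which the pointwise theorem still applies, or invokes the density form directly). Therefore $B^\epsilon\notin q$, so $R^\epsilon_{A,B}\in q$ for every essential idempotent $q$, i.e.\ $R^\epsilon_{A,B}\in{\cal D}^*\subseteq{\cal D}^*_+$.

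For the converse, assume that for all $A,B\in{\cal B}$ and all $\epsilon>0$ the set $R^\epsilon_{A,B}$ lies in ${\cal D}^*_+$, and suppose the system is not ergodic. Then there is an invariant set $A$ with $0<\mu(A)<1$; taking $B=X\setminus A$ we have $\mu(A\cap T_gB)=\mu(A\cap B)=0$ for all $g$, while $\mu(A)\mu(B)>0$. Choose $\epsilon=\mu(A)\mu(B)/2>0$; then $R^\epsilon_{A,B}=\{g:0>\mu(A)\mu(B)-\epsilon\}=\emptyset$. But the empty set is a member of no ultrafilter, so it cannot belong to any translate of an essential idempotent—indeed essential idempotents exist because $\beta G$ has idempotents whose members have positive upper density (one must note $G$ amenable guarantees such essential idempotents exist, e.g.\ minimal idempotents are essential since syndetic sets have positive upper density). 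This contradicts $\emptyset\in{\cal D}^*_+$, so the system must be ergodic.

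The main obstacle I anticipate is the direction ``positive upper density of $R^\epsilon_{A,B}$'' versus the genuinely stronger ``$R^\epsilon_{A,B}\in{\cal D}^*$'': one must rule out $B^\epsilon=G\setminus R^\epsilon_{A,B}$ belonging to \emph{some} essential idempotent, which requires linking membership in an essential idempotent to a \folner\ sequence along which Cesàro averages of $\mu(A\cap T_gB)$ can be controlled. The delicate point is that the Lindenstrauss theorem needs a tempered \folner\ sequence, whereas the density witnessing $B^\epsilon\in q$ comes from an arbitrary \folner\ sequence; bridging this gap—either by passing to a tempered subsequence (Lindenstrauss shows every \folner\ sequence has a tempered subsequence) and checking the density survives, or by using a version of the ergodic theorem along the relevant nets—is the technical heart of the proof.
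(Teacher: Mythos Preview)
Your converse direction is fine and matches the paper's (nonemptiness of $R^\epsilon_{A,B}$ forces ergodicity via Lemma~3.10). The forward direction, however, contains a genuine error: you attempt to prove $R^\epsilon_{A,B}\in{\cal D}^*$, but this is strictly stronger than the theorem's conclusion and is in fact \emph{false} for merely ergodic systems --- by Theorem~3.11 of the paper, $R^\epsilon_{A,B}\in{\cal D}^*$ for all $A,B,\epsilon$ characterizes weak mixing, not ergodicity. Concretely, for an irrational circle rotation with $A=B=[0,\tfrac12)$ the set $B^\epsilon=\{n:\mu(A\cap T^nB)\le\tfrac14-\epsilon\}$ has positive density, so your argument cannot go through.

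The specific wrong step is the sentence ``averaging $\mu(A\cap T_gB)$ over a \folner\ sequence witnessing that density gives a limit $\le\mu(A)\mu(B)-\epsilon$''. Positive density of $B^\epsilon$ along $(F_n)$ only bounds the average of $1_{B^\epsilon}$; the Ces\`aro average of $\mu(A\cap T_gB)$ over $F_n$ involves all $g\in F_n$, and the contribution from $g\notin B^\epsilon$ can easily compensate so that the full average still equals $\mu(A)\mu(B)$. No contradiction arises.

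The paper's proof supplies the missing idea: one cannot avoid a translate. Decompose $1_B=h_1+h_2$ with $h_1\in H_c$ and $h_2\in H_{wm}$; Lemmas~\ref{id con1} and~\ref{id con 2} give, for every essential idempotent $p$,
\[
p\text{-}\lim_g\,\mu(T_{g_0}A\cap T_gB)=\int 1_A(T_{g_0}x)\,h_1(x)\,d\mu.
\]
Ergodicity (via Lindenstrauss and dominated convergence) guarantees the existence of some $g_0$ making the right-hand side exceed $\mu(A)\mu(B)-\epsilon$, whence $g_0^{-1}R^\epsilon_{A,B}\in p$. This is exactly why only ${\cal D}^*_+$ (and not ${\cal D}^*$) appears in the statement: the shift $g_0$ depends on $A,B,\epsilon$ and comes from the compact part $h_1$, which an ergodic but non--weakly-mixing system genuinely has.
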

To prove the above Theorem we need the following Lemma. From now to express  ``measure preserving system'' we will also write m.p.s.
\begin{lemma}
Any invertible probability m.p.s. $(X,{\cal B},\mu,\langle T_{g}\rangle_{g\in G})$
is ergodic if and only if $N(A,B)=\{g\in G:\mu(T_{g}A\cap B)>0\}\neq\emptyset$
for any $A,B\in{\cal B}$ such that $\mu(A)\mu(B)>0$.\end{lemma}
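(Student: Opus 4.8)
The plan is to reduce both implications to the standard characterization of ergodicity by triviality of invariant sets, arguing each direction by contradiction and exploiting the countability of $G$ throughout.

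For the implication ``ergodic $\Rightarrow N(A,B)\neq\emptyset$'', I would suppose toward a contradiction that $A,B\in\mathcal{B}$ satisfy $\mu(A)\mu(B)>0$ yet $N(A,B)=\emptyset$, so that $\mu(T_{g}A\cap B)=0$ for every $g\in G$. Form the orbit saturation $E=\bigcup_{g\in G}T_{g}A$. Since $G$ is countable this $E$ is measurable, and because $T_{h}\circ T_{g}=T_{hg}$ and $g\mapsto hg$ is a bijection of $G$, one gets $T_{h}E=\bigcup_{g}T_{hg}A=E$, so $E$ is invariant. As $A\subseteq E$ and $\mu(A)>0$, ergodicity forces $\mu(E)=1$, whence $\mu(E\cap B)=\mu(B)>0$. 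But countable subadditivity gives $\mu(E\cap B)\leq\sum_{g\in G}\mu(T_{g}A\cap B)=0$, a contradiction. Hence $N(A,B)\neq\emptyset$.

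For the converse I would argue contrapositively: if the system fails to be ergodic there is an invariant set $E$ with $0<\mu(E)<1$. Taking $A=E$ and $B=X\setminus E$ we have $\mu(A)\mu(B)>0$, while invariance gives $T_{g}A=E$ modulo a null set for every $g$, so $\mu(T_{g}A\cap B)=\mu(E\cap(X\setminus E))=0$ for all $g$. Thus $N(A,B)=\emptyset$, which contradicts the hypothesis; therefore the hypothesis forces ergodicity.

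The calculations here are transparent, so the only points that demand care are the mod-null reading of ``invariant'' (the relation $T_{g}E=E$ is to be interpreted up to measure zero, which is legitimate since the $T_{g}$ are invertible and measure preserving) and the two uses of countability of $G$, which are precisely what make the saturation $E$ measurable and the subadditivity estimate valid. I expect the main, though mild, obstacle to be bookkeeping the null exceptional sets consistently across the countable family $\{T_{g}\}_{g\in G}$ when asserting that the saturation $E$ is genuinely invariant rather than invariant only along finitely many group elements.
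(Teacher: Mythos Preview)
Your argument is correct. The converse direction coincides with the paper's: both take an invariant set $E$ with $0<\mu(E)<1$ and observe $N(E,E^{c})=\emptyset$. For the forward implication, however, the paper invokes the Lindenstrauss pointwise ergodic theorem: along a tempered \folner\ sequence the averages $\frac{1}{|F_{n}|}\sum_{g\in F_{n}}\mu(T_{g}A\cap B)$ converge to $\mu(A)\mu(B)>0$, so some term must be positive. Your route via the orbit saturation $E=\bigcup_{g\in G}T_{g}A$ and countable subadditivity is strictly more elementary---it uses only the definition of ergodicity and the countability of $G$, bypassing any ergodic theorem entirely. The trade-off is that the paper's computation yields the quantitative limit $\lim_{n}\frac{1}{|F_{n}|}\sum_{g\in F_{n}}\mu(T_{g}A\cap B)=\mu(A)\mu(B)$, which it then recycles in the proof of Theorem~3.9; your approach gives only nonemptiness of $N(A,B)$, which is all the lemma requires but no more.
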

\begin{proof}
Let $(X,{\cal B},\mu,\langle T_{g}\rangle_{g\in G})$ be a ergodic
measure preserving system. Then for any tempered \folner\ sequence
 $\langle F_{n}\rangle_{n\in\N}$,

\[
\mbox{lim}_{n}\frac{1}{|F_{n}|}\sum_{g\in F_{n}}1_{A}(T_{g}x)=\int1_{A}\left(x\right)d\mu\left(x\right)=\mu(A)\mbox{ a.e. }
\]

Then dominated convergence theorem we have

\[
\mbox{lim}_{n}\frac{1}{|F_{n}|}\sum_{g\in F_{n}}\int1_{A}(T_{g}x)1_{B}\left(x\right)d\mu\left(x\right)=\mu(A)\int1_{B}d\mu=\mu\left(A\right)\mu\left(B\right)>0.
\]

i.e.

\[
\mbox{lim}_{n}\frac{1}{|F_{n}|}\sum_{g\in F_{n}}\mu(T_{g}A\cap B)=\mu\left(A\right)\mu\left(B\right)>0.
\]

Hence there exists $g_{0}\in G$ such that $\mu(T_{g_{0}}A\cap B)>0$.
This implies, $N(A,B)\neq\emptyset$.

Conversely let $(X,{\cal B},\mu,\langle T_{g}\rangle_{g\in G})$ is
not ergodic. Then there exists $A\in{\cal B}$ with $0<\mu(A)<1$
such that $T_{g}A=A$ for all $g\in A$. This implies that $0=\mu(A\cap A^{c})=\mu(T_{g}A\cap A^{c})$.
This contradicts the fact that $N(A,B)\neq\emptyset$.
\end{proof}

\begin{proof}[Proof of Theorem 3.9.]
Assume that $(X,{\cal B},\mu,\langle T_{g}\rangle_{g\in G})$ is
ergodic. Denote $f=1_{A}$ and $h=1_{B}$. Decompose $h=h_{1}+h_{2},$
$h_{1}\in H_{c}$, $h_{2}\in H_{wm}.$ Clearly $\intop h_{1}d\mu=\mu\left(B\right)$.
Now let $\langle F_{n}\rangle{}_{n\in\N}$ be tempered \folner\ sequence.
Then

\[
\frac{1}{|F_{n}|}\sum_{g\in F_{n}}f(T_{g}x)\rightarrow\int f\left(x\right)d\mu\left(x\right)=\mu(A)\mbox{ a.e. }.
\]
from dominated convergence theorem we have

\[
\frac{1}{|F_{n}|}\sum_{g\in F_{n}}\int f(T_{g}x)g_{1}\left(x\right)d\mu\left(x\right)\rightarrow\mu(A)\int g_{1}d\mu=\mu\left(A\right)\mu\left(B\right).
\]
hence there exist $g_{o}$ satisfying $\intop f(T_{g_{0}}x)g_{1}\left(x\right)d\mu\left(x\right)>\mu\left(A\right)\mu\left(B\right)-\epsilon$.
Let $p$ be an essential idempotent. Applying Lemma \ref{id con1}
and \ref{id con 2}, we can write

\[
p\mbox{-lim}\mu(T_{g_{0}}A\cap T_{g}B)=p\mbox{-lim}\int f(T_{g_{0}}x)g(T_{g}x)d\mu\left(x\right)
\]

\[
=p\mbox{-lim}\int f(T_{g_{0}}x)g_{1}(T_{g}x)d\mu\left(x\right)+p\mbox{-lim}\int f(T_{g_{0}}x)g_{2}(T_{g}x)d\mu\left(x\right)
\]

\[
=\int f(T_{g_{0}}x)g_{1}\left(x\right)>\mu\left(A\right)\mu\left(B\right)-\epsilon.
\]
this implies that $g_{0}^{-1}R_{A,B}^{\epsilon}\in p,$which proves
that $R_{A,,B}^{\epsilon}$ is $D_{+}^{*}$ set.

The converse implication is obvious : if the sets $R_{A,B}^{\epsilon}$
are $D_{+}^{*}$ then they are nonempty which implies ergodicity.
Previous Lemma.\end{proof}
\begin{thm}
The system $(X,{\cal B},\mu,\langle T_{g}\rangle_{g\in G})$ is weakly
mixing if and only if for any $A,B\in{\cal B}$ and $\epsilon>0$
the set $R_{A,B}^{\epsilon}$ is $D^{*}$. \end{thm}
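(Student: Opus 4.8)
The plan is to derive the equivalence directly from the Remark following the mixing lemmas (the characterization: the system is weakly mixing if and only if for all $A,B\in{\cal B}$ and every essentially recurrent idempotent $p$ one has $p\mbox{-lim}_g\mu(A\cap T_gB)=\mu(A)\mu(B)$), recalling that by the earlier lemma an idempotent of $\beta G$ is essentially recurrent in $(\beta G,\langle\lambda_q\rangle)$ precisely when it is an essential idempotent. Throughout I will use the standard facts that for a bounded real sequence $(a_g)_{g\in G}$ and $p\in\beta G$ the limit $p\mbox{-lim}_g a_g$ exists in $\R$, is additive, satisfies $p\mbox{-lim}_g(c-a_g)=c-p\mbox{-lim}_g a_g$, and that $\{g:a_g>c\}\in p$ forces $p\mbox{-lim}_g a_g\ge c$.

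For the forward implication, suppose the system is weakly mixing, fix $A,B\in{\cal B}$ and $\epsilon>0$, and let $p$ be an \emph{arbitrary} essential idempotent. By the cited Remark, $p\mbox{-lim}_g\mu(A\cap T_gB)=\mu(A)\mu(B)>\mu(A)\mu(B)-\epsilon$, hence the set $R_{A,B}^{\epsilon}=\{g\in G:\mu(A\cap T_gB)>\mu(A)\mu(B)-\epsilon\}$ lies in $p$. Since $p$ ranged over all essential idempotents, $R_{A,B}^{\epsilon}\in{\cal D}^{*}$.

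For the converse, assume $R_{A,B}^{\epsilon}\in{\cal D}^{*}$ for every $A,B\in{\cal B}$ and every $\epsilon>0$; I want to show weak mixing via the Remark. Fix an essential idempotent $p$ and sets $A,B$. From $R_{A,B}^{\epsilon}\in p$ we get $p\mbox{-lim}_g\mu(A\cap T_gB)\ge\mu(A)\mu(B)-\epsilon$ for each $\epsilon>0$, hence $p\mbox{-lim}_g\mu(A\cap T_gB)\ge\mu(A)\mu(B)$. The crucial move is to apply this same lower bound with $A$ replaced by $A^{c}$: since each $T_g$ preserves $\mu$ we have $\mu(A^{c}\cap T_gB)=\mu(T_gB)-\mu(A\cap T_gB)=\mu(B)-\mu(A\cap T_gB)$, so $\mu(B)-p\mbox{-lim}_g\mu(A\cap T_gB)=p\mbox{-lim}_g\mu(A^{c}\cap T_gB)\ge\mu(A^{c})\mu(B)=\mu(B)-\mu(A)\mu(B)$, which yields the reverse bound $p\mbox{-lim}_g\mu(A\cap T_gB)\le\mu(A)\mu(B)$. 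Combining, $p\mbox{-lim}_g\mu(A\cap T_gB)=\mu(A)\mu(B)$ for every essential idempotent $p$ and all $A,B\in{\cal B}$, so the Remark gives weak mixing.

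I do not expect a genuine obstacle here, since once the Remark is in hand the argument is purely formal; the only slightly delicate point is precisely the passage from the one-sided estimate to equality through complementation, which uses measure preservation ($\mu(T_gB)=\mu(B)$) and the elementary arithmetic of $p$-limits. In particular there is no need to reopen the $H=H_{c}\oplus H_{wm}$ decomposition, as it has already been packaged into the cited Remark. (If one preferred a self-contained proof, one would instead decompose $1_{B}=h_{1}+h_{2}$ with $h_1\in H_c$, $h_2\in H_{wm}$, use Lemmas on $p\mbox{-lim}U_g$ for essential idempotents, and observe $H_c=\{0\}$ on $L_0^2$ forces $R_{A,B}^\epsilon$ to be $D^*$ and conversely.)
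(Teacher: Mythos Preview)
Your proof is correct and follows essentially the same route as the paper: the forward direction is identical (invoke the Remark to get $p\text{-}\lim_g\mu(A\cap T_gB)=\mu(A)\mu(B)$, hence $R_{A,B}^\epsilon\in p$), and for the converse both you and the paper use the complementation trick $\mu(A^c\cap T_gB)=\mu(B)-\mu(A\cap T_gB)$ together with measure preservation to upgrade the one-sided bound to equality. The only cosmetic difference is that the paper introduces the auxiliary set $L_{A,B}^\epsilon=\{g:\mu(A\cap T_gB)<\mu(A)\mu(B)+\epsilon\}$ and shows $R_{A^c,B}^\epsilon\subseteq L_{A,B}^\epsilon$ is $D^*$, whereas you carry out the same computation directly at the level of $p$-limits.
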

\begin{proof}
Assume that $(X,{\cal B},\mu,\langle T_{g}\rangle_{g\in G})$ is weakly
mixing. Then by Remark 3.5 for any $A,B\in{\cal B}$ and any essential
idempotent $p$, we have $p$-$\mbox{lim}_{g}\mu(A\cap T_{g}B)=\mu\left(A\right)\mu\left(B\right)$
and hence $R_{A,B}^{\epsilon}$ is a $D^{*}$ set.

To prove the converse part let us assume that

\[
L_{A,B}^{\epsilon}=\{g\in G:\mu(A\cap T_{g}B)<\mu\left(A\right)\mu\left(B\right)+\epsilon\}.
\]
It proved that $L_{A,B}^{\epsilon}$ is also a $D^{*}$-set. Now
$g\in R_{A^{c},B}^{\epsilon}$ implies that $\mu(A^{c}\cap T_{g}B)>\mu\left(A^{c}\right)\mu\left(B\right)-\epsilon$.
But

\[
\mu(A^{c}\cap T_{g}B)+\mu(A\cap T_{g}B)=\mu(T_{g}B)=\mu(B).
\]
So

\[
g\in R_{A^{c},B}^{\epsilon}\Rightarrow\mu\left(B\right)-\mu(A\cap T_{g}B)>\mu\left(A^{c}\right)\mu\left(B\right)-\epsilon\Rightarrow\mu(A\cap T_{g}B)<\mu\left(A\right)\mu\left(B\right)+\epsilon.
\]
Hence $L_{A,B}^{\epsilon}$ is $D^{*}$-set .

Now $L_{A,B}^{\epsilon}\cap R_{A,B}^{\epsilon}$$=$$\{g\in G:|\mu(A\cap T_{g}B)-\mu\left(A\right)\mu\left(B\right)|<\epsilon\}$
is $D^{*}$-set. Thus for any essential idempotent $p$, we have
$p$-$\mbox{lim}_{g}\mu(A\cap T_{g}B)=\mu\left(A\right)\mu\left(B\right)$.
This implies $(X,{\cal B},\mu,\langle T_{g}\rangle_{g\in G})$ is
weak mixing.\end{proof}
\begin{rem}
It is worth to note that if the system $(X,{\cal B},\mu,\langle T_{g}\rangle_{g\in G})$
is weak mixing then $R_{A,B}^{\epsilon}$ is of density one. In fact
from \cite{refBR} we know that for countable discrete amenable group
$G$ the system $(X,{\cal B},\mu,\langle T_{g}\rangle_{g\in G})$
is weak mixing iff for any \folner\ sequence $\{F_{n}\}$ and $A,B\in B$
we have $\frac{1}{\mid F_{n}\mid}{\displaystyle \sum_{g\in F_{n}}}|\mu(A\cap T_{g}B)-\mu\left(A\right)\mu\left(B\right)|\rightarrow0$
as $n\rightarrow\infty$. From this it can be estimated that $R_{A,B}^{\epsilon}\cap L_{A,B}^{\epsilon}$
is of density one i.e $R_{A,B}^{\epsilon}$ is of density one. This
can also be proved by using Lindenstrauss pointwise ergodic Theorem.\end{rem}
\begin{thm}
Let $G$ countable discrete amenable group acting on a probability
space $(X,B,\mu)$. Then $(X,{\cal B},\mu,\langle T_{g}\rangle_{g\in G})$
is weak mixing iff for any \folner \ sequence $\langle F_{n}\rangle$
and $A,B\in{\cal B}$, $R_{A,B}^{\epsilon}$ is of density one.\end{thm}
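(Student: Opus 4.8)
The plan is to derive both implications from the characterization of weak mixing recalled in Remark 3.12 (due to \cite{refBR}): the system $(X,\mathcal{B},\mu,\langle T_{g}\rangle_{g\in G})$ is weak mixing if and only if for every \folner\ sequence $\langle F_{n}\rangle$ and all $A,B\in\mathcal{B}$ one has
\[
\frac{1}{\abs{F_{n}}}\sum_{g\in F_{n}}\abs{\mu(A\cap T_{g}B)-\mu(A)\mu(B)}\longrightarrow 0 .
\]
Everything else is an elementary Markov-inequality argument together with the trivial observation that an intersection of finitely many density-one sets along a fixed \folner\ sequence is again of density one.

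For the forward direction, assume the system is weak mixing and fix $A,B\in\mathcal{B}$, $\epsilon>0$, and a \folner\ sequence $\langle F_{n}\rangle$. If $g\in F_{n}\setminus R_{A,B}^{\epsilon}$ then $\mu(A\cap T_{g}B)\leq\mu(A)\mu(B)-\epsilon$, hence $\abs{\mu(A\cap T_{g}B)-\mu(A)\mu(B)}\geq\epsilon$. Consequently
\[
\frac{\abs{F_{n}\setminus R_{A,B}^{\epsilon}}}{\abs{F_{n}}}\;\leq\;\frac{1}{\epsilon}\cdot\frac{1}{\abs{F_{n}}}\sum_{g\in F_{n}}\abs{\mu(A\cap T_{g}B)-\mu(A)\mu(B)}\;\longrightarrow\;0
\]
by the criterion above, so $R_{A,B}^{\epsilon}$ has density one along $\langle F_{n}\rangle$.

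For the converse, suppose $R_{A,B}^{\epsilon}$ is of density one for every \folner\ sequence, every $A,B\in\mathcal{B}$, and every $\epsilon>0$. Exactly as in the proof of Theorem 3.11, the inclusion $R_{A^{c},B}^{\epsilon}\subseteq L_{A,B}^{\epsilon}$, where $L_{A,B}^{\epsilon}=\{g\in G:\mu(A\cap T_{g}B)<\mu(A)\mu(B)+\epsilon\}$, shows that $L_{A,B}^{\epsilon}$ is also of density one; hence so is the intersection
\[
S_{n}^{\epsilon}:=R_{A,B}^{\epsilon}\cap L_{A,B}^{\epsilon}=\{g\in G:\abs{\mu(A\cap T_{g}B)-\mu(A)\mu(B)}<\epsilon\}.
\]
Splitting $F_{n}=(F_{n}\cap S_{n}^{\epsilon})\,\sqcup\,(F_{n}\setminus S_{n}^{\epsilon})$, the first block contributes at most $\epsilon$ to $\frac{1}{\abs{F_{n}}}\sum_{g\in F_{n}}\abs{\mu(A\cap T_{g}B)-\mu(A)\mu(B)}$, while each summand over the second block is bounded by $1$ and the proportion of that block tends to $0$. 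Therefore $\limsup_{n}\frac{1}{\abs{F_{n}}}\sum_{g\in F_{n}}\abs{\mu(A\cap T_{g}B)-\mu(A)\mu(B)}\leq\epsilon$, and letting $\epsilon\to 0$ gives the \cite{refBR} criterion, so the system is weak mixing.

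I do not expect a serious obstacle here, given Remark 3.12; the only points requiring a little care are the production of the upper-density set $L_{A,B}^{\epsilon}$ (which is lifted verbatim from the proof of Theorem 3.11, and uses that each $T_{g}$ is measure preserving so that $\mu(A^{c}\cap T_{g}B)+\mu(A\cap T_{g}B)=\mu(B)$) and the elementary but slightly fiddly passage between ``$S_{n}^{\epsilon}$ has density one for every $\epsilon>0$'' and ``the averaged absolute deviations tend to $0$.'' As indicated in Remark 3.12, one could alternatively bypass \cite{refBR} in the converse by applying the Lindenstrauss pointwise ergodic theorem on the ergodic components of $\mu$, but the argument above is shorter.
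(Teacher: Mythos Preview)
Your argument is correct, and in fact proves both implications, whereas the paper's own proof only treats the forward one. The routes, however, are genuinely different.

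For the forward direction the paper does \emph{not} quote the Bergelson--Rosenblatt criterion; instead it applies the Lindenstrauss pointwise ergodic theorem twice---once to the (ergodic) system $(X,\mu,\langle T_g\rangle)$ and once to the product system $(X\times X,\mu\times\mu,\langle T_g\times T_g\rangle)$, which is ergodic because $T$ is weak mixing---to obtain
\[
\frac{1}{|F_n|}\sum_{g\in F_n}\mu(A\cap T_gB)\to\mu(A)\mu(B)
\quad\text{and}\quad
\frac{1}{|F_n|}\sum_{g\in F_n}\mu(A\cap T_gB)^2\to\mu(A)^2\mu(B)^2,
\]
whence $\frac{1}{|F_n|}\sum_{g\in F_n}\bigl(\mu(A\cap T_gB)-\mu(A)\mu(B)\bigr)^2\to 0$. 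A contradiction argument then forces $R_{A,B}^{\epsilon}\cap L_{A,B}^{\epsilon}$ (hence $R_{A,B}^{\epsilon}$) to have density one. Your Markov-inequality estimate from the $L^1$ criterion in \cite{refBR} is quicker and avoids the product-system detour, at the cost of taking \cite{refBR} as a black box; the paper's version is more self-contained modulo Lindenstrauss, though strictly speaking it is run only along \emph{tempered} \folner\ sequences.

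Your converse is a genuine addition: the paper's proof of this theorem does not include it. The passage $R_{A^{c},B}^{\epsilon}\subseteq L_{A,B}^{\epsilon}$ and the splitting argument you give are exactly right. Two cosmetic remarks: the set you call $S_n^{\epsilon}$ does not depend on $n$, so the subscript is superfluous; and your closing comment about ``bypassing \cite{refBR} in the converse via Lindenstrauss'' has the directions swapped---it is the \emph{forward} implication that the paper handles via Lindenstrauss.
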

\begin{proof}
Since $T_{g}$ is weak mixing in particular ergodic for any tempered
\folner \ sequence $\langle F_{n}\rangle$ in $G$

\[
\frac{1}{|F_{n}|}\sum_{g\in F_{n}}1_{B}(T_{g}x)\rightarrow\int1_{B}\left(x\right)d\mu\left(x\right)=\mu(B)\mbox{ a.e. }
\]

Now applying dominated convergence theorem, we have

\[
\frac{1}{|F_{n}|}\sum_{g\in F_{n}}\int1_{B}(T_{g}x)1_{A}\left(x\right)d\mu\left(x\right)\rightarrow\mu(B)\int1_{A}d\mu=\mu\left(A\right)\mu\left(B\right).
\]

Hence

\[
\frac{1}{\mid F_{n}\mid}\sum_{g\in F_{n}}\mu(A\cap T_{g}B)\to\mu\left(A\right)\mu\left(B\right).
\]
Now using the equality $(A\cap T_{g}B)\times(A\cap T_{g}B)=\left(A\times A\right)\cap(T_{g}\times T_{g})\left(B\times B\right)$
we have

\[
\frac{1}{\mid F_{n}\mid}\sum_{g\in F_{n}}(\mu\times\mu)((A\cap T_{g}B)\times(A\cap T_{g}B))
\]

\[
=\frac{1}{\mid F_{n}\mid}\sum_{g\in F_{n}}\mu\times\mu(\left(A\times A\right)\cap T_{g}\times T_{g}\left(B\times B\right))\rightarrow\mu\times\mu\left(A\times A\right)\mu\times\mu\left(B\times B\right)
\]

hence

\[
\frac{1}{\mid F_{n}\mid}\sum_{g\in F_{n}}\mu(A\cap T_{g}B)^{2}\rightarrow\mu\left(A\right)^{2}\mu\left(B\right)^{2}
\]

Thus

\[
\frac{1}{\mid F_{n}\mid}\sum_{g\in F_{n}}(\mu(A\cap T_{g}B)-\mu\left(A\right)\mu\left(B\right))^{2}
\]

\[
=\frac{1}{\mid F_{n}\mid}\sum_{g\in F_{n}}\mu(A\cap T_{g}B)^{2}-2\mu\left(A\right)\mu\left(B\right)\frac{1}{\mid F_{n}\mid}\sum_{g\in F_{n}}\mu(A\cap T_{g}B)+\mu\left(A\right)^{2}\mu\left(B\right)^{2}\rightarrow0.
\]

Now if possible let $\frac{|S\cap F_{n}|}{|F_{n}|}\rightarrow l>0$,
where $S=G\setminus(R_{A,B}^{\epsilon}\cap L_{A,B}^{\epsilon})$.
Then

\[
\frac{1}{\mid F_{n}\mid}\sum_{g\in F_{n}}(\mu(A\cap T_{g}B)-\mu\left(A\right)\mu\left(B\right))^{2}
\]

\[
\geq\frac{1}{\mid F_{n}\mid}\sum_{g\in S\cap F_{n}}(\mu\left(A\cap T_{g}B\right)-\mu\left(A\right)\mu\left(B\right))^{2}
\]

\[
\geq\frac{1}{|F_{n}|}\sum_{g\in S\cap F_{n}}\epsilon^{2}=\epsilon^{2}\frac{|S\cap F_{n}|}{|F_{n}|}\rightarrow\epsilon^{2}l>0,
\]

a contradiction .
\end{proof}

\section{Few Words on Goldbach Conjecture}

If Goldbach conjecture is true then we have $2\N\subset P+P$. Author
in \cite{refEs} established that $2\N\setminus P+P$ is of natural
density zero. This shows that if $A$ be a Central-set in $\N$ it intersects
$2\N$ centrally i.e.  in a set with positive upper density and therefore meets
$P+P$. Hence $P+P$ becomes a Central$^{*}$-set. But it is unknown to
us, whether $P+P$ is an IP$^{*}$-set.

In the following we provide an example of D$^{*}$-sets in polynomial
ring over finite fields $F_{q}$ where $\mbox{Char }F_{q}\neq2$.
In case of $(\Z,+)$ we know that any ideal generated by an integer
is an IP$^{*}$-set. However in the following Theorem we shall extend
this result for $(F_{q}\left[X_{1},X_{2},\ldots,X_{k}\right],+)$
in its full generality.

\begin{thm}
\label{IPstar}The ideal $\langle f_{1}(X_{1}),\ldots,f_{k}(X_{k})\rangle$
generated by $f_{1}(X_{1})$, $\ldots$,
$f_{k}(X_{k})$ in the polynomial ring $(F_{q}\left[X_{1},X_{2},\ldots,X_{k}\right],+,\cdot)$
over finite field\textup{ $F_{q}$} is an IP$^{*}$-set. in the group
$(F_{q}\left[X_{1},X_{2},\ldots,X_{k}\right],+)$. \end{thm}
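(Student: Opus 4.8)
The plan is to forget the ring structure and to note that the ideal $I=\langle f_{1}(X_{1}),\ldots,f_{k}(X_{k})\rangle$ is an additive subgroup of $G=(F_{q}[X_{1},\ldots,X_{k}],+)$ of \emph{finite index}, and then to invoke the general principle that every finite-index subgroup of a group is an IP$^{*}$-set. Throughout we take each $f_{i}$ to be a nonzero polynomial in $X_{i}$, as is implicit in the statement: if some $f_{i}$ is a nonzero constant then $I$ is the whole ring and the assertion is trivial, while the hypothesis that none of the $f_{i}$ vanishes cannot be dropped --- already $\langle X_{2}\rangle$ has infinite index in $F_{q}[X_{1},X_{2}]$ and fails to be IP$^{*}$, since all the finite sums of the sequence $X_{1},X_{1}^{2},X_{1}^{3},\ldots$ lie in $F_{q}[X_{1}]\setminus\{0\}$ and hence outside $\langle X_{2}\rangle$.

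The first step, and the only one calling for any care, is to check that the quotient is finite. Put $d_{i}=\deg f_{i}\geq 1$. Since $F_{q}$ is a field, the leading coefficient of $f_{i}$ is a unit, so the division algorithm in $F_{q}[X_{i}]$ shows that every element of $F_{q}[X_{i}]$ is congruent modulo $f_{i}(X_{i})$ to a polynomial of degree $<d_{i}$. Because each generator $f_{i}(X_{i})$ involves the single variable $X_{i}$ only, these reductions can be carried out one variable at a time, so every element of $F_{q}[X_{1},\ldots,X_{k}]$ is congruent modulo $I$ to an $F_{q}$-linear combination of the finitely many monomials $X_{1}^{a_{1}}\cdots X_{k}^{a_{k}}$ with $0\leq a_{i}<d_{i}$ for every $i$. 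Hence $[G:I]=\bigl|F_{q}[X_{1},\ldots,X_{k}]/I\bigr|\leq q^{\,d_{1}\cdots d_{k}}<\infty$; equivalently $F_{q}[X_{1},\ldots,X_{k}]/I\cong\bigotimes_{i=1}^{k}F_{q}[X_{i}]/(f_{i})$, a finite ring. This is exactly where the special shape of the generators enters.

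It remains to show that a subgroup $H$ of a group $G$ with $[G:H]<\infty$ belongs to every idempotent of $\beta G$ and is therefore an IP$^{*}$-set. The quotient map $\pi:G\to G/H$ onto the finite group $G/H$ extends to a continuous homomorphism $\widetilde{\pi}:\beta G\to\beta(G/H)=G/H$; if $p\in\beta G$ is idempotent then $\widetilde{\pi}(p)$ is an idempotent of the finite group $G/H$, hence equals its identity $e$, so that $H=\pi^{-1}(\{e\})\in p$. As $p$ was an arbitrary idempotent, $H$ is a member of every idempotent, i.e. $H$ is IP$^{*}$. (One can also see this without $\beta G$: for any sequence $\langle x_{n}\rangle_{n\in\N}$ in $G$ the partial sums $x_{1}+\cdots+x_{n}$ assume only finitely many values in $G/H$, so two coincide, say after $n<n'$ terms, whence $x_{n+1}+\cdots+x_{n'}\in H$ is a finite sum from the sequence --- precisely the statement that $H$ meets every IP-set.) Taking $H=I$ gives the theorem, and no restriction on $\mathrm{Char}\,F_{q}$ is needed.
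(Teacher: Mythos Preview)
Your proof is correct, and its core observation---that the ideal $I$ has finite additive index because division by each $f_i(X_i)$ leaves a remainder of $X_i$-degree $<d_i$---is exactly what the paper uses as well. The difference lies only in how the pigeonhole is cashed in. The paper, working directly with a sequence $\langle g_n\rangle$, writes each $g_n=h_n+r_n$ with $h_n\in I$ and $r_n$ in the finite set of reduced remainders, then finds $p=\mathrm{Char}\,F_q$ indices with a common remainder $r$ and adds them so that the remainder becomes $p\cdot r=0$. You instead look at the partial sums $x_1+\cdots+x_n$ in the finite quotient $G/I$ and obtain a telescoping finite sum in $I$; this is the standard argument that any finite-index subgroup is IP$^{*}$, and it avoids the appeal to the characteristic entirely. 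Your framing is cleaner, isolates the general principle, and shows (as you note) that no hypothesis on $\mathrm{Char}\,F_q$ is required, whereas the paper's version tacitly leans on $p\cdot r=0$.
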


\begin{proof}
We will show that ideal $\langle f_{1}(X_{1}),f_{2}(X_{2}),\ldots,f_{k}(X_{k})\rangle$
is an IP$^{*}$-set and hence a D$^{*}$-set in $(F_{q}\left[X_{1},X_{2},\ldots,X_{k}\right],+)$.

For simplicity we work with $k=2$. Let $\langle g_{n}(X_{1},X_{2})\rangle_{n=1}^{\infty}$be
a sequence in $F_{q}[X_{1},X_{2}]$.

Let $g(X_{1},X_{2})$ be a polynomial in $F_{q}[X_{1},X_{2}]$. Then

\[
g(X_{1},X_{2})=\sum_{i\leq n,j\leq m}a_{i,j}X_{1}^{i}X_{2}^{j},\mbox{ where }a_{i,j}\in F_{q}.
\]

Since $X_{1}^{i},f_{1}(X_{1})\in F_{q}[X_{1}]$ and $X_{2}^{j},f_{2}(X_{2})\in F_{q}[X_{2}]$
by applying division algorithm we have
\[
X_{1}^{i}=f_{1}(X_{1})q_{i}(X_{1})+r_{i}(X_{1})\mbox{, where deg}(r_{i}(X_{1}))<\mbox{deg}f_{1}(X_{1})
\]
\[
X_{2}^{j}=f_{2}(X_{2})q_{j}(X_{2})+r_{j}(X_{2})\mbox{, where deg}(r_{j}(X_{2}))<\mbox{deg}f_{2}(X_{2}).
\]
Then $g(X_{1},X_{2})$ can be expressed as
\[
\sum_{i\leq n,j\leq m}a_{i,j}(f_{1}(X_{1})q_{i}(X_{1})+r_{i}(X_{1}))(f_{2}(X_{2})q_{j}(X_{2})+r_{j}(X_{2})).
\]
\[
g(X_{1},X_{2})=f_{1}(X_{1})h_{1}(X_{1},X_{2})+f_{2}(X_{2})h_{2}(X_{1},X_{2})
\]
\[
+\sum_{\begin{array}{c}
\mbox{ deg}(r_{i}(X_{1}))<\mbox{deg}f_{1}(X_{1})\\
\mbox{deg}(r_{j}(X_{2}))<\mbox{deg}f_{2}(X_{2})
\end{array}}a_{i,j}r_{i}(X_{1})r_{j}(X_{2}).
\]
Therefore we can write

\[
g(X_{1},X_{2})=h(X_{1},X_{2})+r(X_{1},X_{2}),
\]
where

\[
h(X_{1},X_{2})\in\langle f_{1}(X_{1}),f_{2}(X_{2})\rangle
\]
and $r(X_{1},X_{2})$ is a polynomial such that $\mbox{deg}r(X_{1},X_{2})<\mbox{deg}f_{1}(X_{1})+\mbox{deg}f_{2}(X_{2})$.

This implies that

\[
g_{n}(X_{1},X_{2})=h_{n}(X_{1},X_{2})+r_{n}(X_{1},X_{2})
\]
where

\[
h_{n}(X_{1},X_{2})\in\langle f_{1}(X_{1}),f_{2}(X_{2})\rangle
\]
and $r_{n}(X_{1},X_{2})$ is a polynomial such that $\mbox{deg}r_{n}(X_{1},X_{2})<\mbox{deg}f_{1}(X_{1})+\mbox{deg}f_{2}(X_{2})$.

But the set $\{r_{n}(X_{1},X_{2}):n\in\N\}$ is finite. Since $\{g_{n}(X_{1},X_{2}):n\in\N\}$
is infinite there exists $p$ many polynomials $g_{n_{i}}(X_{1},X_{2}):i=1,2,\ldots,p$
such that the corresponding $r_{n_{i}}(X_{1},X_{2})$ for $i=1,2,\ldots,p$
are equals. Now adding we get
\[
\sum_{i=1}^{p}g_{n_{i}}(X_{1},X_{2})=\sum_{i=1}^{p}h_{n_{i}}(X_{1},X_{2})+\sum_{i=1}^{p}r_{n_{i}}(X_{1},X_{2}).
\]
This implies that
$${\displaystyle \sum_{i=1}^{p}g_{n_{i}}(X_{1},X_{2})}\in\langle f_{1}(X_{1}),f_{2}(X_{2})\rangle$$
as
$${\displaystyle \sum_{i=1}^{p}r_{n_{i}}(X_{1},X_{2})=0}.$$
 Therefore
$\langle f_{1}(X_{1}),f_{2}(X_{2})\rangle$ is an IP$^{*}$-set and
hence D$^{*}$-set.
\end{proof}
Let us state first the analogue Goldbach conjecture for the function
field $F_{q}[X]$.\\

\noindent \textbf{Goldbach conjecture for the polynomials over finite field
:} For a monic polynomial $m(X)$ with degree $\geq2$, there exist
two monic irreducible polynomial $f_{1}(X)$ and $f_{2}(X)$ with
$\mbox{deg }f_{2}(X)<\mbox{deg }f_{1}(X)=\mbox{deg }m(X)$ such that
$m(X)=f_{1}(X)+f_{2}(X)$.\\

For any polynomial $m(X)$ there exists $\alpha\in F_{q}$ such that $\alpha m(X)$
is monic and if above conjecture is true then $\alpha m(X)$=$f_{1}(X)$ $+f_{2}(X)$
$\Rightarrow m(X)=\alpha^{-1}f_{1}(X)+\alpha^{-1}f_{2}(X)$. Hence
this implies that

\[
P_{X}+P_{X}=\{f_{1}(X)+f_{2}(X):\mbox{ irreducible polynomial }f_{1}(X)\mbox{ and }f_{2}(X)\}
\]
is a Central$^{*}$-set. But it is not known to us that whether like $P+P$,
the following set $P_{X}+P_{X}$ is a Central$^{*}$-set. We conclude this
article wit the following thm which shows that $^{*}$notions are
equivalent.

\begin{thm}
\label{syndetic IP}Any $IP^{*}$-set containing $0$ in $(F_{q}[X],+)$
contains an ideal of the form $\langle X^{m}\rangle$ for some $m\in\N$.
\end{thm}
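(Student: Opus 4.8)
The plan is to exploit the pigeonhole/finiteness argument already used in Theorem~\ref{IPstar} together with the fact that an $IP^{*}$-set is a member of \emph{every} idempotent, so in particular it meets every $IP$-set generated by a prescribed sequence. First I would fix an $IP^{*}$-set $A$ with $0\in A$ and look at the sequence $g_{n}=X^{n}$, $n\in\N$. For each $n$, division by $X$ gives $X^{n}=X\cdot X^{n-1}$ for $n\ge 1$, so already every $X^{n}$ with $n\ge 1$ lies in the ideal $\langle X\rangle$; the genuine point is to locate a single $m$ so that \emph{all} polynomials of degree $\ge m$, i.e. the whole ideal $\langle X^{m}\rangle$, are forced into $A$. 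To that end I would, for a fixed $m$, consider finite sums of distinct monomials from $\{X^{m},X^{m+1},X^{m+2},\ldots\}$; these finite sums are exactly the polynomials in $\langle X^{m}\rangle$ all of whose coefficients in degrees $<m$ vanish and whose nonzero coefficients are $1$. Since $A$ is $IP^{*}$, $A$ contains the $FS$-set of this sequence (this is the characterization of $IP^{*}$-sets recalled in the introduction: a set is $IP^{*}$ iff it is a member of every idempotent, equivalently it meets, in fact contains a shifted copy of, every $IP$-set), and I would leverage closure of $A$ under the relevant sums.

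The key steps, in order: (1) Reduce to showing that some tail $\langle X^{m}\rangle$ is contained in $A$, using that $\langle X^{m}\rangle$ is the union over $d$ of the finite-dimensional $F_{q}$-subspaces $V_{d}=\{h\in F_{q}[X]:\deg h\le d,\ X^{m}\mid h\}$. (2) Observe that each $h\in\langle X^{m}\rangle$ can be written as a finite sum of monomials $a_{j}X^{j}$ with $j\ge m$, and since $|F_{q}|=q$, each such $a_{j}X^{j}$ is an $(q-1)$-fold (or fewer) sum of the monomial $X^{j}$ itself with a coefficient, so it suffices to handle sums of \emph{distinct} monomials $X^{j}$, $j\ge m$, each used at most $q-1$ times; equivalently, work with the sequence in which $X^{j}$ is repeated $q-1$ times and take finite sums. (3) Run the degree-reduction/pigeonhole argument of Theorem~\ref{IPstar}: for the sequence $\langle X^{n}\rangle_{n}$, write $X^{n}=h_{n}+r_{n}$ with $h_{n}\in\langle X^{m}\rangle$ and $\deg r_{n}<m$; since $0\in A$ and $A$ is $IP^{*}$, the $FS$-set of $\langle X^{n}\rangle$ lies in $A$, and because only finitely many residues $r_{n}$ occur, infinitely many of the $X^{n_{i}}$ share a common residue, so appropriate differences (available since we are in a group) of their $FS$-sums land in $\langle X^{m}\rangle\cap A$. (4) Choose $m$ at the end: take $m$ to be, say, $1$ plus the maximal degree appearing among the finitely many residues that could obstruct, or more cleanly, observe the argument works for a suitable $m$ determined by the finiteness in step~(3); then conclude $\langle X^{m}\rangle\subseteq A$.

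The main obstacle I anticipate is step~(3)/(4): turning "$A$ contains the $FS$-set of every sequence" into "$A$ contains an entire ideal $\langle X^{m}\rangle$". The residue bookkeeping only directly yields that $A$ contains arbitrarily many $FS$-sums with matching low-degree part, and one must argue that the \emph{set of achievable differences} of such sums is all of $\langle X^{m}\rangle$ for a fixed $m$ — this requires carefully choosing the sequence (with repetitions to realize every coefficient in $F_{q}$) and verifying that the group-difference structure of $(F_{q}[X],+)$ lets one hit every target polynomial, not just a cofinite-degree family. A clean way around this may be to invoke that $\langle X^{m}\rangle$ itself, being an ideal of finite index in $(F_{q}[X],+)$ (since $F_{q}[X]/\langle X^{m}\rangle$ has $q^{m}$ elements), is syndetic, and then use that any $IP^{*}$-set is a $\Delta^{*}$-set, hence must be a union of cosets of any finite-index subgroup it meets — but I would first try the elementary pigeonhole route above, since it parallels the proof of Theorem~\ref{IPstar} already in hand.
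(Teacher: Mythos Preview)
Your route diverges from the paper's: there the argument is that any syndetic set in $(F_q[X],+)$ is, modulo finitely many elements, a union of cosets of some $\langle X^m\rangle$, and then the $IP$-property forces the identity coset into that union. You instead try to build $\langle X^m\rangle$ inside $A$ directly from $FS$-sets of monomial sequences.

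There is a genuine and fatal gap at exactly the point you flagged. The pigeonhole/residue argument of Theorem~\ref{IPstar} only yields $A\cap\langle X^m\rangle\neq\emptyset$ for each $m$; it cannot be upgraded to $\langle X^m\rangle\subseteq A$. Your fallback --- that an $IP^*$ (or $\Delta^*$) set ``must be a union of cosets of any finite-index subgroup it meets'' --- is false already in $(\Z,+)$: the set $2\Z\cup\{1\}$ is $IP^*$, meets both cosets of $2\Z$, yet is not a union of cosets. More to the point, no argument can succeed, because the statement is false as written. Take $S=\{f\in F_q[X]:f(1)=0\}$. Then $0\in S$, and $S$ is $IP^*$: given any sequence $\langle g_n\rangle$, pigeonhole on the values $g_n(1)\in F_q$ produces $p=\mbox{char}\,F_q$ indices sharing a common value $a$, and the corresponding finite sum evaluates at $1$ to $pa=0$, so every $FS$-set meets $S$. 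But $S$ contains no $\langle X^m\rangle$, since $X^m\in\langle X^m\rangle$ while $X^m(1)=1\neq 0$. (The paper's own structural claim about syndetic sets fails for the same $S$: it is a syndetic subgroup of index $q$, yet is not, even up to finite error, a union of $\langle X^m\rangle$-cosets for any $m$.)
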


\begin{proof}
Let us claim that any syndetic IP set $A$ in $(F_{q}[X],+)$ contains
$\langle X^{m}\rangle$ for some $m\in\N$. Now $A$ being a syndetic
set will be of the form

\[
A={\displaystyle \bigcup_{i=1}^{k}(f_{i}(X)+\langle X^{m}\rangle)}\mbox{ (modulo finite terms) }
\]
for some $m,k\in\N$ with $m>\mbox{deg}f_{i}(X)$. Again since $A$
is an IP-set one of $f_{i}(X)$ must be zero.

We end this article with the following observation.\end{proof}
\begin{rem}
We know that for arbitrary countable discrete amenable group, IP$^{*}$$\Rightarrow$
D$^{*}$$\Rightarrow$C$^{*}$. In $(\N,+)$ the converse implications
do not hold. But from Theorem \ref{IPstar} and \ref{syndetic IP}
it follows that converse implications do hold for $F_{q}[X]$. \end{rem}

\bibliographystyle{amsplain}

\end{document}